\newtheorem{coro}{Corollary}[section]
\newtheorem{theorem}{Theorem}[section]
\newtheorem{lemma}[theorem]{Lemma}
\theoremstyle{remark}
\numberwithin{equation}{section}
\begin{document}
\title [The eighth order mock theta function $V_0(q)$]{Congruence properties of coefficients of the eighth order mock theta function $V_0(q)$}
\author{B. Hemanthkumar}
\address[B. Hemanthkumar]{Department of Mathematics, M. S. Ramaiah University of Applied Sciences, Peenya Campus, Peenya 4th Phase, Bengaluru-560 058, Karnataka, India.} \email{hemanthkumarb.30@gmail.com}

\begin{abstract}
We study the divisibility properties of the partition function associated with the eighth order mock theta function $V_0(q)$, introduced by Gordon and McIntosh. 
We obtain congruences modulo powers of 2 for certain coefficients of the partition function, akin to Ramanujan's partition congruences. Further, we also present several infinite families of congruences molulo 13, 25 and 27.
\end{abstract}
	
\subjclass[2010]{05A17, 11P83} \keywords{Partition; overpartition; Congruence}
\maketitle
\section{Introduction}	
Ramanujan in his last letter to G. H. Hardy recorded 17 mock theta functions, which he classified into three classes of orders 3, 5 and 7 (The order is analogous to the level of a modular form). Later, Watson \cite{W} found 5 more functions of orders 3 and 5, which appear in the lost notebook \cite{SR}. Thenceforth, other mock theta functions of different orders have been found.

In \cite{GM}, Gordon and McIntosh established eight new mock theta functions of order 8, one of which is $V_0(q)$, defined by
\begin{equation*}
V_0(q)=-1+2\sum_{n=0}^\infty \frac{q^{{n^2}} (-q;q^2)_n}{(q;q^2)_n}=-1+2\sum_{n=0}^\infty \frac{q^{2n^2} (-q^2;q^4)_n}{(q;q^2)_{2n+1}}
\end{equation*}
and
\begin{equation}\label{S1}
V_0(q)+V_0(-q)=2(-q^2; q^4)^4_\infty (q^8; q^8)_\infty.
\end{equation}
Here we use the standard notation:
\begin{equation*}
(a; q)_0 = 1, \quad (a; q)_n = \prod_{i=1}^n (1-aq^{i-1})  \quad \text{and} \quad (a; q)_\infty = \lim_{n\rightarrow \infty}(a; q)_n  \quad |q|<1.
\end{equation*}
The function $V_0(q)$ is rather interesting. It has an interpretation as the generating function for $v_0(n)$, number of overpartitions of $n$ into odd parts without gaps between the non--overlined parts. It is given by
\begin{equation}\label{S11} 
\sum_{n=0}^\infty v_0(n) q^n = \sum_{n=0}^\infty \frac{q^{{n^2}} (-q;q^2)_n}{(q;q^2)_n} = \frac{V_0(q)+1}{2}.
\end{equation}
It is easy from \eqref{S1} and \eqref{S11} that
\begin{equation*}
\frac{1}{2}+\sum_{n=1}^\infty v_0(2n) q^n =\frac{(q^2;q^2)_\infty}{2(q;q)_\infty (q^4; q^4)^3_\infty}.
\end{equation*}
Further, other combinatorial interpretatitons to $V_0(q)$ were given by Agarwal and Sood \cite{AS} using split $(n+t)$--color partitions, and by Rana and Sareen \cite{RS} in terms of signed partitions.

Congruence properties of partition functions related with mock theta functions have been the interest of recent study. For instance, Andrews, Dixit and Yee \cite{ADY} introduced the partition functions $p_\omega(n)$ and $p_\nu(n)$ associated with the third order mock theta functions $\omega(q)$ and $\nu(q)$ defined by
\begin{equation*}
\omega(q) = \sum_{n=0}^\infty \frac{q^{2n^2+2n}}{(q;q^2)^2_{n+1}} \quad \text{and} \quad \nu(q) = \sum_{n=0}^\infty \frac{q^{n^2+n}}{(-q;q^2)_{n+1}}.
\end{equation*}
While, Andrews \textit{et al}. \cite{APSY} obtained infinite families of congruences modulo 4 and 8 for the functions $p_\omega(n)$ and $p_\nu(n)$.

Recently, applying the theory of (mock) modular forms and Zwegers' results on Appell-Lerch sums, Mao \cite{M} established two identities for $V_0(q)$, in terms of $v_0(n)$ is given by
\begin{equation*}
\sum_{n=0}^\infty v_0(8n+2) = 2 \frac{f_2^4 f_4^5}{f_1^6 f_8^2}  \quad \text{and} \quad \sum_{n=0}^\infty v_0(8n+6) = 4 \frac{f_2^2 f_4^4}{f_1^5}.
\end{equation*} 
As a consequence, showed that for all $n\geq 0$
\begin{equation*}
 v_0(40n+13) \equiv   v_0(40n+37) \equiv 0 \pmod{40}.
\end{equation*} 
At the same time, Brietzke, Silva and Sellers \cite{BSS} using elementary generating function manipulations, presented the parity of $v_0(n)$, identities involving the generating function for $v_0(n)$ and many congruences modulo certain number of the form $2^\alpha 3^\beta 5$. For example, they obtain 
\begin{equation}\label{E119}
\frac{1}{2}+\sum_{n=1}^\infty v_0(4n)q^n= \frac{\varphi(q)^2} {2\varphi(-q)},
\end{equation}
as a consequence showed that for all $n\geq 0$ and prime $p$,
\begin{equation}
v_0(4n) \equiv \begin{cases}
(-1)^k \pmod{4},\quad \text{if}\,\,\, n=k^2, \\
   \qquad  0 \pmod{4}, \quad \text{otherwise}		\label{E0} 
\end{cases}
\end{equation} 
and
\begin{equation}
v_0(4(pn+r)) \equiv 0 \pmod{4} \label{E00}
\end{equation}
if $r$ is a quadratic nonresidue modulo $p$. \\ They also prove that 
\begin{equation}\label{S13}
v_0(16n+12) \equiv 0 \pmod{2^4} \quad \text{and} \quad v_0(32n+28) \equiv 0 \pmod{2^6}.\end{equation}
In this paper, we probe arithmetic properties of the partition function $v_0(n)$, 
and prove several infinite families of congruences modulo powers of 2. In fact, we generalize the results \eqref{E119}-\eqref{S13} and also obtain many Ramanujan type congruences modulo 13, 25 and 27.

The main results of the paper are as follow:

\begin{theorem}\label{T2}
 For any integer $n\geq0$ and $\alpha>0$, we have
\begin{equation}\label{E1} 
v_0(2^{2\alpha+2}n)\equiv \begin{cases}
1 \pmod{2^{3\alpha+2}},\quad {\rm{if}}\,\,\, n=k^2, \\
0 \pmod{2^{3\alpha+2}}, \quad {\rm{otherwise}}.		
\end{cases}
\end{equation}
\end{theorem}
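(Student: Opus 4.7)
The plan is to prove by induction on $\alpha\geq 1$ the equivalent reformulation
\[A_\alpha(q) := 1 + 2\sum_{n\geq 1} v_0(2^{2\alpha+2}n)\,q^n \;\equiv\; \varphi(q) \pmod{2^{3\alpha+3}},\]
where $\varphi(q) = \sum_{k\in\mathbb{Z}} q^{k^2} = 1 + 2\sum_{k\geq 1} q^{k^2}$; matching coefficients (with $v_0(0)=1$ matching the $k=0$ term) then gives \eqref{E1}.

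For the base case $\alpha = 1$, \eqref{E119} rewrites as $A_0(q) = \varphi(q)^2/\varphi(-q)$, and $A_1$ is the $4$-section of $A_0$. Using the standard identities
\[\varphi(q)^2 = \varphi(q^2)^2 + 4q\psi(q^4)^2,\quad \varphi(-q)^2 = \varphi(q^2)^2 - 4q\psi(q^4)^2,\quad \psi(q^2)^2 = \varphi(q^2)\psi(q^4),\]
together with $\varphi(q)\varphi(-q) = \varphi(-q^2)^2$, and performing two successive even-part extractions (each followed by $q^2\to q$), I would derive the closed form
\[A_1(q) = \varphi(q)\bigl(4W^4 - 3\bigr),\qquad W := \varphi(q)/\varphi(-q).\]
Writing $4W^4 - 3 = 1 + 4(W-1)(W+1)(W^2+1)$ and using
\[W-1 = \frac{4q\psi(q^8)}{\varphi(-q)},\quad W+1 = \frac{2\varphi(q^4)}{\varphi(-q)},\quad W^2+1 = \frac{2\varphi(q^2)^2}{\varphi(-q)^2}\]
(each in $\mathbb{Z}[[q]]$ since $\varphi(-q)$ is a unit), one reads off that $(W-1)(W+1)(W^2+1)$ is divisible by $2^4$, hence $A_1\equiv\varphi(q)\pmod{2^6}$.

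For the inductive step, write $A_\alpha = \varphi(q) + 2^{3\alpha+3}E_\alpha$ with $E_\alpha\in\mathbb{Z}[[q]]$. The passage $A_\alpha\mapsto A_{\alpha+1}$ is the $4$-section operator $U_4$, and $U_4\varphi(q) = \varphi(q)$ because every square divisible by $4$ is of the form $(2m)^2$. Hence
\[A_{\alpha+1}(q) = \varphi(q) + 2^{3\alpha+3}\,U_4 E_\alpha(q),\]
and the induction closes provided $U_4 E_\alpha\equiv 0\pmod{2^3}$ for every $\alpha\geq 1$. This $8$-divisibility is the main obstacle. The base case supplies the explicit form $E_1 = q\,\varphi(q)\psi(q^2)^4/\varphi(-q)^4$, and the natural plan is to carry $E_\alpha$ as an explicit theta-product through the induction and bound the $2$-adic valuation of $U_4 E_\alpha$ using repeated $2$-dissection together with the Frobenius-type congruence $f(q)^{2^k}\equiv f(q^2)^{2^{k-1}}\pmod{2^k}$ valid for any $f\in\mathbb{Z}[[q]]$. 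Because $U_4$ does not simplify ratios of theta series in an obvious algebraic way, identifying a manageable inductive invariant for $E_\alpha$ is the delicate point of the argument.
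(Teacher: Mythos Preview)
Your base case is correct and in fact recovers exactly the paper's identity \eqref{E1115}, rewritten as $A_1(q)=\varphi(q)(4W^4-3)$; the factorization of $W^4-1$ is a clean way to see the $2^6$-divisibility. The gap is in the inductive step: you correctly isolate the obstruction $U_4 E_\alpha\equiv 0\pmod{8}$, but you do not prove it. Knowing only that $E_\alpha\in\mathbb{Z}[[q]]$ is not enough---$U_4$ has no reason to gain three powers of $2$ on an arbitrary integral series---so the argument as written does not close. You say the plan is to ``carry $E_\alpha$ as an explicit theta-product through the induction,'' but you never specify an invariant that survives $U_4$, and this is precisely the heart of the matter.

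The paper resolves this by maintaining, at every $\alpha$, an \emph{exact} expansion of $A_\alpha$ in the basis $\varphi(q)^{4j-3}/\varphi(-q)^{4j-4}$ (Theorem~\ref{T1}), not merely a leading term plus error. The Huffing operator $H$ (your $U_2$) acts on each basis element by an explicit finite sum in the same family, encoded by a fixed integer matrix $M$; two applications of $H$ therefore replace the coefficient vector $\textbf{x}_{2\alpha-1}$ by $\textbf{x}_{2\alpha+1}=\textbf{x}_{2\alpha-1}AB$. One then proves inductively the $2$-adic bound $\vartheta_2(x_{2\alpha-1,k+1})\geq 3\alpha+2k-4$ (Lemma giving \eqref{42}), which combined with $\varphi(q)^{4i}/\varphi(-q)^{4i}\equiv 1\pmod{2^4}$ yields the claimed congruence. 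In other words, the ``manageable inductive invariant'' you are missing is exactly this structured coefficient vector with graded $2$-divisibility; tracking only the single series $E_\alpha$ loses the stratification by $j$ that makes the $2^3$ gain per step visible.
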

The following corollary is an immediate consequence of the above theorem.
\begin{coro}\label{C1}
For any integer $\alpha>0$ and prime $p\geq3$, if $r$ is a quadratic nonresidue modulo $p$, then 
\begin{equation}\label{E2}
v_0(2^{2\alpha+2}(pn+r)) \equiv 0 \pmod {2^{3\alpha+2}}.
\end{equation}
\end{coro}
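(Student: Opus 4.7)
The plan is to derive the corollary directly from Theorem~\ref{T2} by a single substitution combined with a standard quadratic residue argument. Specifically, I would substitute $n \mapsto pn+r$ into \eqref{E1} and argue that the hypothesis on $r$ forces $pn+r$ to land in the ``otherwise'' case of the piecewise formula.

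In detail, I would first recall that if $r$ is a quadratic nonresidue modulo $p$ (and in particular $r \not\equiv 0 \pmod p$, since $p \geq 3$ is an odd prime), then for every integer $n \geq 0$ the value $pn+r$ satisfies $pn+r \equiv r \pmod p$. Hence if $pn+r$ were a perfect square, say $pn+r = k^2$, we would obtain $k^2 \equiv r \pmod p$, which contradicts the assumption that $r$ is a quadratic nonresidue mod $p$. Therefore $pn+r$ is never a perfect square.

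With that observation in hand, Theorem~\ref{T2} applied to the integer $pn+r$ (in place of $n$) immediately yields
\begin{equation*}
v_0\bigl(2^{2\alpha+2}(pn+r)\bigr) \equiv 0 \pmod{2^{3\alpha+2}},
\end{equation*}
which is exactly \eqref{E2}.

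There is essentially no obstacle here: the entire content of the corollary is packed into Theorem~\ref{T2}, and the only extra ingredient is the elementary fact that a quadratic nonresidue class modulo $p$ contains no perfect squares. The main work lies in proving Theorem~\ref{T2} itself, not in this deduction.
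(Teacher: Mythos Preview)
Your proposal is correct and matches the paper's approach exactly: the paper simply declares the corollary to be ``an immediate consequence of the above theorem'' without writing any further details, and your argument spells out precisely the intended one-line deduction (quadratic nonresidue class contains no perfect squares, hence Theorem~\ref{T2} applies in its ``otherwise'' case).
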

\begin{theorem}\label{T3}
 For all integers $n\geq0$ and $\alpha\geq0$, we have
\begin{align}
v_o(2^{2\alpha+1}(2n+1))  &\equiv 0 \pmod {2^{3\alpha}}, \label{E3} \\
v_o(2^{2\alpha+2}(4n+3))  &\equiv 0 \pmod {2^{3\alpha+4}}, \label{E4} \\
v_o(2^{2\alpha+2}(8n+7))  &\equiv 0 \pmod {2^{3\alpha+6}}, \label{E5} \\
v_o(2^{2\alpha+2}(8n+5))  &\equiv 0 \pmod {2^{3\alpha+6}}. \label{E55} 
\end{align}
\end{theorem}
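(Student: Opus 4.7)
The plan is to prove Theorem \ref{T3} by induction on $\alpha$, leveraging the generating-function identity that underlies Theorem \ref{T2}. A convenient reformulation of \eqref{E1} is
$$\sum_{n\geq 0} v_0(2^{2\alpha+2}n)\,q^n \equiv \sum_{k\geq 0} q^{k^2} \pmod{2^{3\alpha+2}},$$
and I expect the inductive proof of Theorem \ref{T2} to actually yield the sharper exact identity
$$\sum_{n\geq 0} v_0(2^{2\alpha+2}n)\,q^n \;=\; \sum_{k\geq 0} q^{k^2} + 2^{3\alpha+2}\, G_\alpha(q),$$
with $G_\alpha(q)\in \mathbb{Z}[[q]]$ an explicit eta-quotient (or $\mathbb{Z}$-linear combination of such). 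This refined identity will be the main engine for Theorem \ref{T3}.

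To obtain \eqref{E4}, \eqref{E5}, and \eqref{E55}, I would $2$-dissect this identity and isolate its odd-index part. The key observation is that every odd square is $\equiv 1 \pmod 8$, so the odd-index contribution from $\sum_k q^{k^2}$ lives entirely in the single residue class $1 \pmod 8$. Thus, on the residue classes $4n+3$, $8n+5$, and $8n+7$, the squares term contributes nothing and the whole coefficient comes from $2^{3\alpha+2}\,G_\alpha(q)$. Using standard $2$-dissection formulas for the eta-quotient factors comprising $G_\alpha$, one argues that an extra factor of $2^2$ arises upon restricting to the $3 \pmod 4$ residue (giving \eqref{E4}), and an extra factor of $2^4$ arises upon restricting to the $5 \pmod 8$ and $7 \pmod 8$ residues (giving \eqref{E5} and \eqref{E55}). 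For \eqref{E3} I would run a parallel analysis one level lower in the $2$-adic filtration, working with $\sum_m v_0(2^{2\alpha+1}m)\,q^m$, which can be obtained by a single coefficient-extraction step from the identity for $\sum_m v_0(2m)\,q^m$ recorded in the introduction; the induction then produces an error term divisible by $2^{3\alpha}$, whose odd-index part supplies \eqref{E3}.

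The principal obstacle lies in pinning down the explicit eta-quotient form of $G_\alpha(q)$ and verifying that each dissected coefficient indeed gains the advertised extra factor of $4$ or $16$. This is routine in spirit, relying on $2$-dissection identities for $\varphi(q)$, $\psi(q)$, and the related eta-quotients, but is delicate in practice; the cleanest route is probably to carry out a single joint induction that simultaneously strengthens Theorem \ref{T2} and establishes all four congruences in Theorem \ref{T3} in one pass.
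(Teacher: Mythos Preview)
Your plan correctly identifies the top-level mechanism --- on the residue classes $4n+3$, $8n+5$, $8n+7$ the series $\sum_k q^{k^2}$ contributes nothing, so everything must come from the error term $2^{3\alpha+2}G_\alpha(q)$ --- but what you call ``the principal obstacle'' is in fact the entire content of the theorem, and your proposal does not address it. Saying that ``extra factors of $2^2$ or $2^4$ arise upon dissection'' is precisely what has to be proved; there is no general reason why dissecting an arbitrary integral power series $G_\alpha$ along residue classes should raise its $2$-adic valuation at all. You never specify $G_\alpha$, never prove the ``sharper exact identity'' you say you expect, and never produce the extra powers of $2$. As written, this is an outline, not a proof.

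The paper does not try to track a single error term $G_\alpha$ through repeated dissections. Instead it builds, in Theorem~\ref{T1}, \emph{separate} closed-form generating functions for each of the subsequences $v_0(2^{2\alpha+2}n)$, $v_0(2^{2\alpha+3}n)$, $v_0(2^{2\alpha+2}(2n+1))$, and $v_0(2^{2\alpha+2}(4n+1))$, each expressed as a finite sum $\sum_j c_{\alpha,j}\,\Phi_j(q)$ where the $\Phi_j$ are explicit $\varphi/\psi$-quotients and the coefficient vectors $\mathbf{x}_\alpha,\mathbf{y}_\alpha,\mathbf{z}_\alpha$ satisfy matrix recursions. The extra powers of $2$ you need are then obtained not from dissection identities for the $\Phi_j$, but from $2$-adic lower bounds on the matrix entries: $\vartheta_2(x_{2\alpha,k+1})\geq 3\alpha+k$, $\vartheta_2(y_{\alpha,k+1})\geq 3\alpha+3k+1$, $\vartheta_2(z_{\alpha,k+1})\geq 3\alpha+6k$, proved by induction on the recursions. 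For instance, \eqref{E4} and \eqref{E5} come from \eqref{T13} together with $\vartheta_2(y_{\alpha,2})=3\alpha+4$, while \eqref{E55} comes from \eqref{T14} together with $\vartheta_2(z_{\alpha,2})\geq 3\alpha+6$; the congruence \eqref{E3} uses the separate identity \eqref{T12} and the bound on $x_{2\alpha,k+1}$. If you want your approach to succeed, you would effectively have to reconstruct this machinery or an equivalent; the vague appeal to ``standard $2$-dissection formulas'' does not substitute for it.
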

\begin{theorem}\label{T4}
If $n$ cannot be represented as a triangular number, then for any integer $\alpha\geq0$, $\beta\geq0$ and  prime $p\geq3$,  
\begin{equation}
v_o(2^{2\alpha+2}p^{2\beta}(8n+1))  \equiv 0 \pmod {2^{3\alpha+9}}. \label{E6} 
\end{equation}
\end{theorem}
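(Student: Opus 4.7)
The plan is to first reduce the theorem to the case $\beta=0$ by a short arithmetic observation, and then establish that reduced case by induction on $\alpha$. For the reduction, note that $p^2 \equiv 1 \pmod{8}$ for any odd prime $p$, so $M_\beta := (p^{2\beta}-1)/8$ is a nonnegative integer. Setting $N := n p^{2\beta} + M_\beta$, one directly checks that $2^{2\alpha+2}p^{2\beta}(8n+1) = 2^{2\alpha+2}(8N+1)$ and $8N+1 = p^{2\beta}(8n+1)$. Since $p^{2\beta}$ is itself an odd perfect square, $8N+1$ is an odd perfect square if and only if $8n+1$ is, i.e.\ if and only if $n$ is a triangular number. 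Hence $N$ is non-triangular exactly when $n$ is, and \eqref{E6} follows from the case $\beta=0$:
\begin{equation}\label{reducedGoalT4}
v_0(2^{2\alpha+2}(8N+1)) \equiv 0 \pmod{2^{3\alpha+9}} \qquad \text{for every non-triangular } N \geq 0.
\end{equation}

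For \eqref{reducedGoalT4} I would prove the stronger statement that the generating function
\[
G_\alpha(q) \;:=\; \sum_{N \geq 0} v_0\bigl(2^{2\alpha+2}(8N+1)\bigr)\, q^N
\]
is, modulo $2^{3\alpha+9}$, supported only on the triangular exponents $T_k = k(k+1)/2$; the theorem for non-triangular $N$ is then immediate. For the base case $\alpha = 0$, I would start from \eqref{E119}, rewrite $\varphi(q)^2/\varphi(-q)$ as the eta-quotient $f_2^{11}/(f_1^6 f_4^4)$, and perform three successive $2$-dissections of the series to isolate the arithmetic progression $4(8N+1)=32N+4$. Standard $2$-dissection identities such as $\varphi(q) = \varphi(q^4) + 2q\psi(q^8)$ and $\varphi(q)\varphi(-q) = \varphi(-q^2)^2$, together with the classical binomial congruences $f_1^{2^k} \equiv f_2^{2^{k-1}} \pmod{2^k}$, should reduce the resulting eta quotient modulo $2^9$ to a constant multiple of $\psi(q) := \sum_{k \geq 0} q^{T_k} = f_2^2/f_1$. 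For the inductive step $\alpha \to \alpha+1$, I would apply the same $2$-dissection machinery used in the proofs of Theorems~\ref{T2}--\ref{T3} to derive a dissection identity relating $G_{\alpha+1}$ to $G_\alpha$, supplying three additional powers of $2$ in the modulus.

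The principal obstacle is the base case: the jump from the modulus $2^2$ of \eqref{E0} to $2^9$ here---a gain of $2^7$---requires carefully tracking every source of $2$-adic valuation in the eight-dissection. The constraint $8N+1$ forces any contributing square in $\varphi(q)$ to be odd, so it must arise from the odd-index summand $2q\psi(q^8)$ of $\varphi(q) = \varphi(q^4) + 2q\psi(q^8)$, each extraction supplying a factor of $2$; iterating through the three successive dissections, combined with the prefactor $1/2$ in \eqref{E119} and additional $2$-adic contributions from $\varphi(-q) = f_1^2/f_2$ in the denominator, these should accumulate to the required $2^7$. Once the base case and the inductive identity are in hand, Theorem~\ref{T4} follows by the reduction of the first paragraph.
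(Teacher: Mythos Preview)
Your reduction to $\beta=0$ is correct and matches the paper's use of \eqref{EE2}: both rest on the observation that $8N+1=p^{2\beta}(8n+1)$ is an odd square if and only if $8n+1$ is. Your base case is also on target; the paper carries out precisely those dissections, obtaining the exact identity \eqref{T00} and then \eqref{319}, namely $\sum_{N\ge 0}v_0(32N+4)q^N\equiv 3\psi(q)\pmod{2^9}$.

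The gap is in the inductive step. Knowing only that $G_\alpha(q)\equiv c_\alpha\psi(q)\pmod{2^{3\alpha+9}}$ is not enough to conclude anything about $G_{\alpha+1}$ modulo $2^{3\alpha+12}$: there is no direct $2$-dissection identity linking $G_{\alpha+1}$ to $G_\alpha$, because passing from the progression $2^{2\alpha+2}(8N+1)$ to $2^{2\alpha+4}(8N+1)$ requires first backing up to the full sequence $v_0(2^{2\alpha+2}m)$, dissecting twice in $m$, and then re-extracting the residue class $8N+1$. The error terms you discarded modulo $2^{3\alpha+9}$ at stage $\alpha$ re-enter this process and must themselves be controlled modulo $2^{3\alpha+12}$; the congruence for $G_\alpha$ alone carries no information about them.

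What is needed is a \emph{stronger} inductive hypothesis that tracks the entire generating function, not just its reduction modulo a fixed power of $2$. This is exactly what the paper does: it maintains the exact expansion \eqref{T14},
\[
\sum_{n\ge0}v_0\bigl(2^{2\alpha+2}(4n+1)\bigr)q^n=\sum_{j\ge1}z_{\alpha,j}\,q^{j-1}\frac{\psi(q^2)^{4j-3}}{\varphi(-q)^{4j-4}},
\]
propagates the coefficient vectors $\mathbf{z}_\alpha$ through the matrix recursions \eqref{311}--\eqref{314}, and proves the $2$-adic bound $\vartheta_2(z_{\alpha,k+1})\ge 3\alpha+6k$ (Lemma giving \eqref{L15}). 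Only after that does one truncate modulo $2^{3\alpha+9}$ and extract even exponents to obtain \eqref{317}. Your outline should be amended to carry this richer data through the induction; the phrase ``a dissection identity relating $G_{\alpha+1}$ to $G_\alpha$'' does not describe a step that exists.
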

\begin{coro}\label{C2}
For any integer $n\geq0$, $\alpha\geq0$, $\beta \geq0$ and odd prime $p$, we have
\begin{equation}
v_o(2^{2\alpha+2}p^{2\beta+1}(8pn+8i+p))  \equiv 0 \pmod {2^{3\alpha+9}}, \,\ i\in\{1,2,3,\ldots,p-1\} \label{E8} 
\end{equation} 
and 
\begin{equation}
v_o(2^{2\alpha+2}p^{2\beta}(8pn+8j+1))  \equiv 0 \pmod {2^{3\alpha+9}}, \label{E9} 
\end{equation} 
where $j$ is an integer with $0\leq j \leq p-1$ such that $\left(\frac{8j+1}{p}\right)=-1$.
\end{coro}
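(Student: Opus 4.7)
The plan is to derive both \eqref{E8} and \eqref{E9} directly from Theorem \ref{T4} by rewriting each argument of $v_o$ in the normal form $2^{2\alpha+2}p^{2\gamma}(8m+1)$ and verifying in each case that the resulting $m$ cannot be written as a triangular number. I rely on the elementary fact that $m\geq 0$ is triangular if and only if $8m+1$ is a perfect square, which follows from the identity $8\cdot\tfrac{k(k+1)}{2}+1=(2k+1)^2$.

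For \eqref{E9}, I would set $m := pn+j$, so that $8pn+8j+1 = 8m+1$ and the argument of $v_o$ is already of the form $2^{2\alpha+2}p^{2\beta}(8m+1)$. Reducing modulo $p$ gives $8m+1 \equiv 8j+1 \pmod{p}$, and the hypothesis $\left(\frac{8j+1}{p}\right) = -1$ forces $8m+1$ to be a quadratic non-residue mod $p$, hence not a perfect square. Therefore $m$ is not triangular, and Theorem \ref{T4} applied with the same $\alpha,\beta$ produces the desired congruence.

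For \eqref{E8}, I would absorb one factor of $p$ into the parenthesis, writing
\[
p^{2\beta+1}(8pn+8i+p) \;=\; p^{2\beta}\bigl(8p^2 n + 8pi + p^2\bigr) \;=\; p^{2\beta}(8m+1),
\]
where $m := p^2 n + pi + \frac{p^2-1}{8}$ is a non-negative integer because $p$ odd gives $p^2 \equiv 1 \pmod{8}$. It remains to check that $8m+1 = p(8pn+8i+p)$ is not a perfect square. Since $i\in\{1,2,\ldots,p-1\}$, we have $p\nmid 8i$ and hence $p\nmid 8pn+8i+p$; thus $p$ divides $8m+1$ to exponent exactly one, ruling out $8m+1$ being a square. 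So $m$ is not triangular, and Theorem \ref{T4} yields \eqref{E8}.

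The argument is essentially a bookkeeping exercise, so I do not anticipate any substantive obstacle; the only details requiring care are the integrality and non-negativity of the auxiliary $m$ in the second case and the clean extraction of the non-square property from the $p$-adic valuation of $8m+1$.
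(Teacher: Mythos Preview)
Your argument is correct. The paper's own deduction of \eqref{E8} and \eqref{E9} goes back to the generating-function congruences \eqref{319} and \eqref{317} together with the $\psi$-coefficient vanishing statements \eqref{EE3} and \eqref{EE4}, whereas you invoke the already-packaged Theorem~\ref{T4} and verify its hypothesis by the elementary square/triangular-number criterion; the two routes are logically equivalent and differ only in packaging.
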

\section{Preliminaries}\label{S2}
\noindent In this section, we provide some preliminary results which play an important role in proving our results. We start by considering Ramanujan's theta function $f(a,b)$, defined by
\begin{equation*}
f(a,b)=\sum_{n=-\infty}^\infty a^{n(n+1)/2} b^{n(n-1)/2},  \,\,\ |ab|<1.
\end{equation*} 
Jacobi's triple product identity takes the form 
\begin{equation*}
f(a,b) = (-a;ab)_\infty (-b;ab)_\infty (ab;ab)_\infty.
\end{equation*}
The three special cases of $f(a,b)$ are
\begin{align*}
\psi(q)&:=f(q,q^3)=\sum_{n=0}^\infty q^{n(n+1)/2}=\frac{(q^2;q^2)_\infty}{(q;q^2)_\infty},\\
\varphi(q)&:=f(q,q)=\sum_{n=-\infty}^\infty q^{n^2} = (-q;q^2)^2_\infty (q^2;q^2)_\infty
\end{align*}
and
\begin{equation*}
f(-q):=f(-q,-q^2)=\sum_{n=-\infty}^\infty (-1)^n q^{n(3n-1)/2} = (q;q)_\infty.
\end{equation*}
For brevity we shall also use $f_n$ to denote $f(-q^n)$.

Consider the identities \cite[Entry 25, p. 40]{BCB3} due to Ramanujan, 
\begin{align}
\varphi(-q^2)^2&=\varphi(q)\varphi(-q), \label{L11}\\
\varphi(-q)&=\varphi(q^4)-2q\psi(q^8), \label{L12}\\
\varphi(-q)^2&=\varphi(q^2)^2-4q\psi(q^4)^2. \label{L13}
\end{align}
In \eqref{L12} we replace $q$ by $q$ and $-q$, and multiply the two results, we obtain
\begin{equation}
\varphi(q^4)^2-4q^2\psi(q^8)^2 = \varphi(-q^2)^2. \label{E11}
\end{equation}
We now let
\begin{equation}\label{E12}
\zeta= \frac{\varphi(-q)}{\varphi(q^4)},\quad T = \frac{\varphi(-q^2)^2}{\varphi(q^4)^2},\quad \xi=\frac{\varphi(-q)^2}{\varphi(q^2)^2}, \quad G= \frac{\varphi(-q^2)^4}{\varphi(q^2)^4}.
\end{equation}
Then \eqref{L12} gives
\begin{equation}\label{E13}
\zeta = 1-2q\frac{\psi(q^8)}{\varphi(q^4)}
\end{equation}
and \eqref{E11} gives
\begin{equation}\label{E14}
T = 1-4q^2\frac{\psi(q^8)^2}{\varphi(q^4)^2}.
\end{equation}
By \eqref{E13},
\begin{equation}\label{E15}
\zeta^2 = 1-4q\frac{\psi(q^8)}{\varphi(q^4)}+4q^2\frac{\psi(q^8)^2}{\varphi(q^4)^2}.
\end{equation}
From \eqref{E15}, \eqref{E14} and \eqref{E13}, it follows that
\begin{equation*}
\zeta^2-2\zeta+T=0.
\end{equation*}
The last identity can be rewritten as
\begin{equation*}
\frac{1}{\zeta}=\frac{1}{T}\left(2-\zeta\right),
\end{equation*}
thus for any $i\geq 1$,
\begin{equation}\label{E16}
\frac{1}{\zeta^i}=\frac{1}{T}\left(\frac{2}{\zeta^{i-1}}-\frac{1}{\zeta^{i-2}}\right).
\end{equation}
Using \eqref{L13}, similar way as above, one can easily see that
\begin{equation}\label{E17}
\xi = 1-4q\frac{\psi(q^4)^2}{\varphi(q^2)^2},
\end{equation}
and for all $i\geq 1$,
\begin{equation}\label{E18}
\frac{1}{\xi^i}=\frac{1}{G}\left(\frac{2}{\xi^{i-1}}-\frac{1}{\xi^{i-2}}\right).
\end{equation}
Let $H$ be the Huffing operator, given by
\begin{equation*}
H\left(\sum_{n=0}^\infty a_n q^n\right)=\sum_{n=0}^{\infty}a_{2n}q^{2n}.
\end{equation*}
From \eqref{E13}, we have
\begin{equation*}
H(\zeta)=1
\end{equation*}
and, clearly, $H(1)=1.$

Also, in view of \eqref{E16}, we see that
\begin{equation}\label{E19}
H\left(\frac{1}{\zeta^i}\right)=\frac{1}{T}\left(H\left(\frac{2}{\zeta^{i-1}}\right)-H\left(\frac{1}{\zeta^{i-2}}\right)\right).
\end{equation}
In particular,
\begin{align}
H\left(\frac{1}{\zeta}\right)&=\frac{1}{T}(2-1)=\frac{1}{T},\label{E1112}\\
H\left(\frac{1}{\zeta^2}\right)&=\frac{1}{T}\left(2\times\frac{1}{T}-1\right)=-\frac{1}{T}+\frac{2}{T^2}\label{E1113}
\end{align}
and
\begin{equation}
H\left(\frac{1}{\zeta^3}\right)=\frac{1}{T}\left(2\left(\frac{-1}{T}+\frac{2}{T^2}\right)-\frac{1}{T}\right)=-\frac{3}{T^2}+\frac{4}{T^3}\label{E111}.
\end{equation}
We note from \eqref{E111} that
\begin{equation*}
H\left(\frac{\varphi(q^4)^3}{\varphi(-q)^3}\right)=-3\frac{\varphi(q^4)^4}{\varphi(-q^2)^4}+4\frac{\varphi(q^4)^6}{\varphi(-q^2)^6},
\end{equation*}
which yeilds,
\begin{equation}\label{E118}
H\left(\frac{\varphi(q)^2}{\varphi(-q)}\right)=-3\varphi(q^4)+4\frac{\varphi(q^4)^3}{\varphi(-q^2)^2},
\end{equation}
Thus, from \eqref{E119} and \eqref{E118}, 
\begin{equation}\label{E1114}
\frac{1}{2}+\sum_{n=1}^\infty v_0(8n)q^n = \frac{-3}{2}\varphi(q^2)+2\frac{\varphi(q^2)^3}{\varphi(-q)^2}.
\end{equation}
Using \eqref{L13} in the last identity and extracting the terms involving even powers of $q$, we obtain
\begin{equation}\label{E1115}
\frac{1}{2}+\sum_{n=1}^\infty v_0(16n)q^n = \frac{-3}{2}\varphi(q)+2\frac{\varphi(q)^5}{\varphi(-q)^4}.
\end{equation}
It is evident from \eqref{E1112}-\eqref{E111} and \eqref{E19} that for $i\geq1$, 
\begin{equation}\label{E112}
H\left(\frac{1}{\zeta^i}\right)=\sum_{j=1}^{i} \frac{m_{i,j}}{T^j}, 
\end{equation}
where the $m_{i,j}$ form a matrix $M$ given as follows:
\begin{enumerate}
\item \label{MR1} $m_{1,1}=1,\ m_{1,j}=0$ for all $j \geq 2$.
\item \label{MR2} $m_{2,1}=-1,\ m_{2,2}=2,\ m_{2,j}=0,$ for all $j \geq 3$.
\item \label{MR3} $m_{i,1}=0$ for all $i \geq 3$.
\item \label{MR4} $m_{i,j}=2m_{i-1,j-1}-m_{i-2,j-1}$ for all $i\geq3\ j\geq 2$.
\end{enumerate}
The first eight rows of $M$ are given by
\begin{equation*}
\left (\begin{matrix} 
1&0&0&0&0&0&0&0&0&\cdots\\
-1&2&0&0&0&0&0&0&0&\cdots\\
0&-3&4&0&0&0&0&0&0&\cdots\\
0&1&-8&8&0&0&0&0&0&\cdots\\
0&0&5&-20&16&0&0&0&0&\cdots\\
0&0&-1&18&-48&32&0&0&0&\cdots\\
0&0&0&-7&56&-112&64&0&0&\cdots\\
0&0&0&1&-32&160&-256&128&0&\cdots\\
\end{matrix}\right ).\\
\end{equation*}
Similarly, using \eqref{E17} and \eqref{E18}, we can see that for $i\geq1$, 
\begin{equation}\label{E113}
H\left(\frac{1}{\xi^i}\right)=\sum_{j=1}^{i} \frac{m_{i,j}}{G^j}.
\end{equation}
Actually, $m_{2i-1,j}=0$ for $1\leq j<i$, so we can write
\begin{equation}\label{E114}
H\left(\frac{1}{\zeta^{2i-1}}\right)=\sum_{j=1}^{i} \frac{a_{i,j}}{T^{j+i-1}},
\end{equation}
where 
\begin{equation}\label{E115}
a_{i,j} = m_{2i-1, j+i-1}.
\end{equation}
We can rewrite \eqref{E114} as
\begin{equation*}
H\left(\left(\frac{\varphi(q^4)}{\varphi(-q)}\right)^{2i-1}\right)=\sum_{j=1}^{i}a_{i,j}\left(\frac{\varphi(q^4)^2}{\varphi(-q^2)^2}\right)^{j+i-1}
\end{equation*}
or
\begin{equation}\label{E116}
H\left(\frac{\varphi(q)^i}{\varphi(-q)^{i-1}}\right)=\sum_{j=1}^{i}a_{i,j}\frac{\varphi(q^4)^{2j-1}}{\varphi(-q^2)^{2j-2}},
\end{equation}
and \eqref{E113} as
\begin{equation}\label{E117}
H\left(\frac{\varphi(q^2)^{2i+1}}{\varphi(-q)^{2i}}\right)=\sum_{j=1}^{i}m_{i,j}\frac{\varphi(q^2)^{4j+1}}{\varphi(-q^2)^{4j}}.
\end{equation}
Now let
\begin{equation}\label{E122}
\mu= \frac{\varphi(-q)}{q\psi(q^8)},\quad S = \frac{\varphi(-q^2)^2}{q^2\psi(q^8)^2},\quad \rho=\frac{\varphi(-q)^2}{q\psi(q^4)^2}, \quad F= \frac{\varphi(-q^2)^4}{q^2\psi(q^4)^4}.
\end{equation}
Then \eqref{L12} gives
\begin{equation*}
\mu = \frac{\varphi(q^4)}{q\psi(q^8)}-2
\end{equation*}
and \eqref{E11} gives
\begin{equation*}
S = \frac{\varphi(q^4)^2}{q^2\psi(q^8)^2}-4.
\end{equation*}
It follows that
\begin{equation*}
\mu^2+4\mu-S=0
\end{equation*}
or
\begin{equation*}
\frac{1}{\mu}=\frac{1}{S}(\mu+4).
\end{equation*}
Thus for $i\geq 1$,
\begin{equation*}
\frac{1}{\mu^i}=\frac{1}{S}\left(\frac{1}{\mu^{i-2}}+\frac{4}{\mu^{i-1}}\right)
\end{equation*}
and 
\begin{equation*}
H\left(\frac{1}{\mu^i}\right)=\sum_{j\geq1} \frac{n_{i,j}}{S^j},
\end{equation*}
where the $n_{i,j}$ form a matrix $N$ defined as follows,
\begin{enumerate}
	\setcounter{enumi}{4}
	\item \label{MR5} $n_{1,1}=2,\ n_{1,j}=0$ for all $j \geq 2$.
	\item \label{MR6} $n_{2,1}=1,\ n_{2,2}=8,\ n_{2,j}=0,$ for all $j \geq 3$.
	\item \label{MR7} $n_{i,1}=0$ for all $i \geq 3$.
	\item \label{MR8} $n_{i,j}=4n_{i-1,j-1}+n_{i-2,j-1}$ for all $i\geq3\ j\geq 2$.
\end{enumerate}
In fact,  $n_{2i-1,j}=0$ for $1\leq j<i$, so we can write
\begin{equation}\label{E214}
H\left(\frac{1}{\mu^{2i-1}}\right)=\sum_{j=1}^{i} \frac{c_{i,j}}{S^{j+i-1}},
\end{equation}
where 
\begin{equation}\label{E215}
c_{i,j} = n_{2i-1, j+i-1}.
\end{equation}
We can rewrite \eqref{E214} as
\begin{equation*}
H\left(\left(q\frac{\psi(q^8)}{\varphi(-q)}\right)^{2i-1}\right)=\sum_{j=1}^{i}c_{i,j}\left(q^2\frac{\psi(q^8)^2}{\varphi(-q^2)^2}\right)^{j+i-1}
\end{equation*}
or
\begin{equation}\label{E216}
H\left(q\frac{\varphi(q)^i}{\varphi(-q)^{i-1}}\right)=\sum_{j=1}^{i}c_{i,j} \ q^{2j}\frac{\psi(q^8)^{2j-1}}{\varphi(-q^2)^{2j-2}}.
\end{equation}
In particular, 
\begin{equation}\label{E217}
H\left(q\frac{\varphi(q)^2}{\varphi(-q)}\right)=6 q^2 \psi(q^8) +32 q^4 \frac{\psi(q^8)^3}{\varphi(-q^2)^2}.
\end{equation}
Multiplying $q$ on both sides of \eqref{E119}, 
\begin{equation*}
\frac{q}{2}+\sum_{n=1}^\infty v_0(4n)q^{n+1}= q\frac{\varphi(q)^2} {2\varphi(-q)}.
\end{equation*}
 From \eqref{E217} and above identity,
\begin{equation} \label{T0}
\sum_{n=0}^\infty v_0(8n+4)q^n = 3\psi(q^4)+16q\frac{\psi(q^4)^3}{\varphi(-q)^2}.
\end{equation} 

Utilizing \eqref{L13}, similar way as above, we readily see that for $i\geq1,$
\begin{equation*}
\frac{1}{\rho^i}=\frac{1}{F}\left(\frac{1}{\rho^{i-2}}+\frac{8}{\rho^{i-1}}\right).
\end{equation*}
and 
\begin{equation*}
H\left(\frac{1}{\rho^i}\right)=\sum_{j\geq1} \frac{p_{i,j}}{F^j}
\end{equation*}
or
\begin{equation*}
H\left(q^i\frac{\psi(q^4)^{2i+1}}{\varphi(-q)^{2i}}\right)=\sum_{j=1}^{i}p_{i,j} \ q^{2j}\frac{\psi(q^4)^{4j+1}}{\varphi(-q^2)^{4j}}.
\end{equation*}
When $i=1$, we get
\begin{equation*}
H\left(q\frac{\psi(q^4)^{3}}{\varphi(-q)^{2}}\right)= 4 q^2 \frac{\psi(q^4)^5}{\varphi(-q^2)^4}.
\end{equation*}
By \eqref{T0} and the last identity,
\begin{equation}\label{T00}
\sum_{n=0}^\infty v_0(16n+4)q^n = 3\psi(q^2)+64q\frac{\psi(q^2)^5}{\varphi(-q)^4}.
\end{equation}
where $p_{i,j}$ form a matrix $P$ defined as follows,
\begin{enumerate}
	\setcounter{enumi}{8}
	\item \label{MR9} $p_{1,1}=4,\ p_{1,j}=0$ for all $j \geq 2$.
	\item \label{MR10} $p_{2,1}=1,\ p_{2,2}=32,\ p_{2,j}=0,$ for all $j \geq 3$.
	\item \label{MR11} $p_{i,1}=0$ for all $i \geq 3$.
	\item \label{MR12} $p_{i,j}=8p_{i-1,j-1}+p_{i-2,j-1}$ for all $i\geq3\ j\geq 2$.
\end{enumerate}
In \cite{CG}, Cui and Gu studied $p$ - dissection identities for $\psi(q)$..\\
For any odd prime $p$,
\begin{equation}\label{EE1}
\psi(q) = \sum_{k=0}^{\frac{p-3}{2}} q^{\frac{k^2+k}{2}} f(q^{\frac{p^2+(2k+1)p}{2}}, q^{\frac{p^2-(2k+1)p}{2}}) + q^{\frac{p^2-1}{8}}\psi(q^{p^2}).
\end{equation}
Futhermore, if $0<k<\frac{p-3}{2}$,
\begin{equation*}
\frac{k^2+k}{2}\not\equiv \frac{p^2-1}{8} \pmod{p}. 
\end{equation*}
Let us suppose
\begin{equation}
\sum_{n=0}^\infty a(n) q^n = \psi(q). \label{EE11}
\end{equation}
Then, it is evident from \eqref{EE1} and mathematical induction that, for any odd prime and $\alpha \geq 0$,
\begin{align}
\sum_{n=0}^\infty a\left(p^{2\alpha} +\frac{p^{2\alpha}-1}{8}\right)q^n=\psi(q),\label{EE2}\\ 
 a\left(p^{2\alpha+2}n +\frac{(8i+p)p^{2\alpha+1}-1}{8}\right)=0\label{EE3}
\end{align}
and
\begin{equation}
 a\left(p^{2\alpha+1}n +\frac{(8j+1)p^{2\alpha}-1}{8}\right)=0,\label{EE4}
\end{equation}
where \quad $i \in \{1, 2, \ldots , p-1\}$ \, and \, $j$ is an integer with $0\leq j \leq p-1$ such that $\left(\frac{8j+1}{p}\right)=-1$.

\section{Generating functions}\label{S3}
In this section, we obtain generating functions for the sequences in Theorems \ref{T2} - \ref{T4}.
Let $A, B, C$ and $D$ be matrices defined as below,
\begin{align*}
A&=(a_{4i-3, j})_{i,j\geq1},\\
B&=(b_{i, j})_{i,j\geq1}
\end{align*}
where $b_{1,1}=1$, $b_{k,1}=b_{1,k}=0$, for $k>1$ and $b_{i,j}=m_{i-1,j-1}$, for $i, j\geq2$,
\begin{equation*}
C = (c_{4i-3, j})_{i,j\geq1}
\end{equation*}
and
\begin{equation*}
D=(d_{i, j})_{i,j\geq1}
\end{equation*}
where $d_{1,1}=1$, $d_{k,1}=d_{1,k}=0$, for $k>1$ and $d_{i,j}=p_{i-1,j-1}$, for $i, j\geq2$.

We now define the coefficient vectors $\textbf{x}_\alpha, \ \textbf{y}_\alpha, \ \textbf{z}_\alpha, \,\ \alpha\geq1$ by 
\begin{equation}\label{E7} 
\textbf{x}_1=(-3/2, \ 2,\ 0,\ 0,\ 0,\ \ldots)
\end{equation}
and for $\alpha\geq1,$
\begin{align}
\textbf{x}_{2\alpha}&=\textbf{x}_{2\alpha-1}A,\label{311}\\
\textbf{x}_{2\alpha+1}&=\textbf{x}_{2\alpha}B,\label{312}\\
\textbf{y}_\alpha &= \textbf{x}_{2\alpha-1}C,\label{313}\\
\textbf{z}_\alpha &= \textbf{y}_{\alpha}D.\label{314}
\end{align}
In particular, the vectors $\textbf{x}_2$ and $\textbf{x}_3$ are given by
\begin{align}
\textbf{x}_2&=(33/2, \,\ -2^4\times3\times5,\,\ 2^5\times3^3,\,\ -2^7\times3^2,\,\ 2^9,\, 0, \,\ 0, \ \ldots). \label{E123}\\
\textbf{x}_3&=(33/2, \,\ -2^4\times3\times23,\,\ 2^6\times89,\,\ -2^9\times17,\,\ 2^{12},\,\ 0, \,\ 0 \ \ldots).\label{E124}
\end{align}

We are now in a stage to prove the following theorem:
\begin{theorem} \label{T1}
For any integer $\alpha > 0,$
\begin{align}
\frac{1}{2}+\sum_{n=1}^\infty v_0(2^{2\alpha+2}n)q^n &= \sum_{j\geq1} x_{2\alpha-1,  j}  \frac{\varphi(q)^{4j-3}}{\varphi(-q)^{4j-4}},\label{T11}\\
\frac{1}{2}+\sum_{n=1}^\infty v_0(2^{2\alpha+3}n)q^n &= \sum_{j\geq1} x_{2\alpha,  j} \frac{\varphi(q^2)^{2j-1}}{\varphi(-q)^{2j-2}},\label{T12}\\
\sum_{n=0}^\infty v_0(2^{2\alpha+2}(2n+1))q^n &=\sum_{j\geq1} y_{\alpha,  j} \ q^{j-1} \frac{\psi(q^4)^{2j-1}}{\varphi(-q)^{2j-2}}\label{T13}
\end{align}
and 
\begin{equation}
\sum_{n=0}^\infty v_0(2^{2\alpha+2}(4n+1))q^n =\sum_{j\geq1} z_{\alpha,  j} \ q^{j-1} \frac{\psi(q^2)^{4j-3}}{\varphi(-q)^{4j-4}}.\label{T14}
\end{equation}
\end{theorem}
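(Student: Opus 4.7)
The plan is to prove all four identities simultaneously by induction on $\alpha$, alternately applying the Huffing operator $H$ and invoking the four dissection identities already established in Section \ref{S2}: \eqref{E116}, \eqref{E117}, \eqref{E216}, and the $\rho$-analog displayed just before \eqref{T00}. The base case $\alpha=1$ of \eqref{T11} is exactly \eqref{E1115}: with $\textbf{x}_1=(-3/2,\,2,\,0,\ldots)$ the right-hand side of \eqref{T11} collapses to $-\tfrac{3}{2}\varphi(q)+2\varphi(q)^5/\varphi(-q)^4$, so nothing else is needed to start the induction.

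\textbf{Main induction, \eqref{T11} $\leftrightarrow$ \eqref{T12}.} To pass from \eqref{T11} at stage $\alpha$ to \eqref{T12} at stage $\alpha$, I would apply $H$ to both sides and substitute $q^2\mapsto q$; the left side then collects the coefficients of $q^{2n}$, which are exactly $v_0(2^{2\alpha+3}n)$. Each summand on the right has odd top power $4j-3$, so \eqref{E116} with $i=4j-3$ applies, and after regrouping the expansion reads $\sum_k\bigl(\sum_j x_{2\alpha-1,j}\,a_{4j-3,k}\bigr)\varphi(q^2)^{2k-1}/\varphi(-q)^{2k-2}$. This matches \eqref{T12} together with the defining relation $\textbf{x}_{2\alpha}=\textbf{x}_{2\alpha-1}A$. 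To pass from \eqref{T12} at stage $\alpha$ to \eqref{T11} at stage $\alpha+1$, apply $H$ once more: the $j\ge 2$ summands are handled by \eqref{E117} with $i=j-1$, while the $j=1$ summand $\varphi(q^2)$ is already a series in $q^2$ so $H(\varphi(q^2))=\varphi(q^2)$. These two cases are packaged precisely by the matrix $B$ (entries $b_{1,1}=1$, $b_{j,k}=m_{j-1,k-1}$ for $j,k\ge 2$, and zero on the remaining first row and first column), yielding $\textbf{x}_{2\alpha+1}=\textbf{x}_{2\alpha}B$.

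\textbf{Odd-index branches \eqref{T13}, \eqref{T14}, and main obstacle.} For \eqref{T13}, I would multiply \eqref{T11} by $q$, apply $H$, divide by $q^2$, and rescale $q^2\mapsto q$: on the left this extracts $v_0(2^{2\alpha+2}(2m+1))$, while on the right \eqref{E216} with $i=4j-3$ transforms each summand into $\sum_k c_{4j-3,k}\,q^{k-1}\psi(q^4)^{2k-1}/\varphi(-q)^{2k-2}$, producing $\textbf{y}_\alpha=\textbf{x}_{2\alpha-1}C$ as stipulated. For \eqref{T14}, apply $H$ directly to \eqref{T13}: the $j\ge 2$ summands are handled by the $\rho$-analog $H\bigl(q^{j-1}\psi(q^4)^{2j-1}/\varphi(-q)^{2j-2}\bigr)=\sum_k p_{j-1,k}\,q^{2k}\psi(q^4)^{4k+1}/\varphi(-q^2)^{4k}$, while the $j=1$ summand $\psi(q^4)$ passes through $H$ untouched; both cases are encoded by $D$, giving $\textbf{z}_\alpha=\textbf{y}_\alpha D$. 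The substantive content lives in Section \ref{S2}; the only real difficulty in the proof is bookkeeping: tracking the index shifts $4k \to 2k \to k$ under successive applications of $q^2\mapsto q$, handling the $j=1$ boundary contributions that strictly speaking sit outside the dissection formulas, and verifying that the re-indexed linear combinations align with the matrix products $A$, $B$, $C$, $D$ as defined in the paper.
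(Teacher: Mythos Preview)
Your proposal is correct and follows essentially the same approach as the paper's own proof: the same base case \eqref{E1115}, the same alternating induction between \eqref{T11} and \eqref{T12} via $H$ together with \eqref{E116} and \eqref{E117}, and the same derivation of \eqref{T13} and \eqref{T14} from \eqref{T11} via $q$-multiplication, $H$, and the identities \eqref{E216} and the $\rho$-analog, with the $j=1$ boundary terms absorbed into $B$ and $D$ exactly as you describe.
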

\begin{proof} 
By \eqref{E1115}, we note that identity \eqref{T11} holds for $\alpha=1$. 
Suppose that \eqref{T11} holds for some $\alpha\geq1$. Applying operator $H$ to both sides gives
\begin{align*}
\frac{1}{2}+\sum_{n=1}^\infty v_0(2^{2\alpha+3}n)q^{2n} &= \sum_{j\geq1} x_{2\alpha-1,  j} H\left(\frac{\varphi(q)^{4j-3}}{\varphi(-q)^{4j-4}}\right)\\
& = \sum_{j\geq1} x_{2\alpha-1, j} \sum_{k\geq1} a_{4j-3,k}\frac{\varphi(q^4)^{2k-1}}{\varphi(-q^2)^{2k-2}}\\
&= \sum_{k\geq1} \left(\sum_{j\geq1} x_{2\alpha-1, j} \ a_{4j-3, k}\right)\frac{\varphi(q^4)^{2k-1}}{\varphi(-q^2)^{2k-2}}\\
&= \sum_{k\geq1} x_{2\alpha, k}\frac{\varphi(q^4)^{2k-1}}{\varphi(-q^2)^{2k-2}}.
\end{align*}
If we now change $q^2$ by $q$, we obtain
\begin{equation*}
\frac{1}{2}+\sum_{n=1}^\infty v_0(2^{2\alpha+3}n)q^{n}= \sum_{k\geq1} x_{2\alpha, k}\frac{\varphi(q^2)^{2k-1}}{\varphi(-q)^{2k-2}},
\end{equation*}
which is in fact \eqref{T12}.\\
Now suppose \eqref{T12} holds for some $\alpha\geq1$. If we apply the operator $H$ to both sides gives
\begin{align*}
\frac{1}{2}+\sum_{n=1}^\infty v_0(2^{2\alpha+4}n)q^{2n} &= \sum_{j\geq1} x_{2\alpha,  j} H\left(\frac{\varphi(q^2)^{2j-1}}{\varphi(-q)^{2j-2}}\right)\\
&=x_{2\alpha, 1} \ \varphi(q^2)+\sum_{j\geq1} x_{2\alpha,  j+1} H\left(\frac{\varphi(q^2)^{2j+1}}{\varphi(-q)^{2j}}\right)\\
&=x_{2\alpha, 1} \ \varphi(q^2)+\sum_{j\geq1} x_{2\alpha,  j+1} \sum_{k\geq1}m_{j,k}\frac{\varphi(q^2)^{4k+1}}{\varphi(-q^2)^{4k}}\\
&=x_{2\alpha, 1} \ \varphi(q^2)+\sum_{k\geq1}  \left(\sum_{j\geq1} x_{2\alpha,  j+1} \ m_{j,k}\right)\frac{\varphi(q^2)^{4k+1}}{\varphi(-q^2)^{4k}}\\
&=\sum_{k\geq1} x_{2\alpha+1,  k}  \frac{\varphi(q^2)^{4k-3}}{\varphi(-q^2)^{4k-4}}.
\end{align*} 
If we change $q^2$ to $q$, we obtain
\begin{equation*}
\frac{1}{2}+\sum_{n=1}^\infty v_0(2^{2\alpha+4}n)q^n = \sum_{k\geq1} x_{2\alpha+1,  k}  \frac{\varphi(q)^{4k-3}}{\varphi(-q)^{4k-4}},
\end{equation*}
which is \eqref{T11} with $\alpha+1$ in place of $\alpha$. This completes the proof of \eqref{T11} and \eqref{T12} by induction.

Multiply $q$ on both sides of \eqref{T11} to obtain 
\begin{equation*}
\frac{q}{2}+\sum_{n=1}^\infty v_0(2^{2\alpha+2}n)q^{n+1} = \sum_{j\geq1} x_{2\alpha-1,  j}\ q \frac{\varphi(q)^{4j-3}}{\varphi(-q)^{4j-4}}.
\end{equation*}
Applying the operator $H$ to both sides of the above identity gives
\begin{align*}
\sum_{n=0}^\infty v_0(2^{2\alpha+2}(2n+1))q^{2n+2} &= \sum_{j\geq1} x_{2\alpha-1,  j}H\left(q \frac{\varphi(q)^{4j-3}}{\varphi(-q)^{4j-4}}\right)\\
 &=\sum_{j\geq1} x_{2\alpha-1,  j} \sum_{k\geq1}c_{4j-3,k} \ q^{2k}\frac{\psi(q^8)^{2k-1}}{\varphi(-q^2)^{2k-2}}\\
 & = \sum_{k\geq1} \left(\sum_{j\geq1} x_{2\alpha-1,  j}\ c_{4j-3,k}\right)q^{2k}\frac{\psi(q^8)^{2k-1}}{\varphi(-q^2)^{2k-2}}\\
 & = \sum_{k\geq1} y_{\alpha,k} \ q^{2k}\frac{\psi(q^8)^{2k-1}}{\varphi(-q^2)^{2k-2}}.
\end{align*}
If we now replace $q$ in place of $q^2$, we obtain
\begin{equation*}
\sum_{n=0}^\infty v_0(2^{2\alpha+2}(2n+1))q^n = \sum_{k\geq1} y_{\alpha,k} \ q^{k-1}\frac{\psi(q^4)^{2k-1}}{\varphi(-q)^{2k-2}},
\end{equation*}
which is \eqref{T13}.

We now apply the operator $H$ to both sides of the identity \eqref{T13},
\begin{align*}
\sum_{n=0}^\infty v_0(2^{2\alpha+2}(4n+1))q^{2n} &=\sum_{j\geq1} y_{\alpha,  j} \ H\left(q^{j-1} \frac{\psi(q^4)^{2j-1}}{\varphi(-q)^{2j-2}}\right)\\
&= y_{\alpha, 1}\ \psi(q^4) +\sum_{j\geq1} y_{\alpha,  j+1} \ H\left(q^{j} \frac{\psi(q^4)^{2j+1}}{\varphi(-q)^{2j}}\right)\\
&=y_{\alpha, 1}\ \psi(q^4) +\sum_{j\geq1} y_{\alpha,  j+1} \ \sum_{k\geq1} p_{j,k}\ q^{2k} \frac{\psi(q^4)^{4k+1}}{\varphi(-q^2)^{4k}}\\
&=y_{\alpha, 1}\ \psi(q^4) +\sum_{k\geq1}\left(\sum_{j\geq1} y_{\alpha,  j+1}\ p_{j,k} \right)q^{2k} \frac{\psi(q^4)^{4k+1}}{\varphi(-q^2)^{4k}}\\
&=\sum_{k\geq1} z_{\alpha, k} \ q^{2k-2} \frac{\psi(q^4)^{4k-3}}{\varphi(-q^2)^{4k-4}}.
\end{align*}
On repalcing $q^2$ by $q$, we obtain \eqref{T14}.
\end{proof}
\section{Congruences modulo powers of 2}
For a positive integer $n$, let $\vartheta_2(n)$ be the highest power of 2 that divides $n$, and define $\vartheta_2(0)=\infty$.

\begin{lemma}
For any integer $j, k\geq 1$, we have
\begin{align}
\vartheta_2(m_{j, k}) &\geq 2k-j-1, \label{41}\\
\vartheta_2(n_{j, k}) &\geq 4k-2j-1, \label{44}
\end{align}
and 
\begin{equation}
\vartheta_2(p_{j, k}) \geq 6k-3j-1. \label{45}
\end{equation}
\end{lemma}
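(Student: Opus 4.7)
The plan is to prove each of the three bounds by a simple induction on the first index, exploiting the parallel structure of the three-term recurrences defining $M$, $N$, and $P$. In each case the multiplier appearing in the recurrence is tuned precisely to the slope of the bound in $j$, so the induction closes tightly.

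For $\vartheta_2(m_{j,k})\geq 2k-j-1$, the base cases are $m_{1,1}=1$, $m_{2,1}=-1$, $m_{2,2}=2$, and $m_{j,1}=0$ for $j\geq3$; a direct check (e.g.\ $m_{2,2}=2$ gives $\vartheta_2=1=2(2)-2-1$) verifies the bound in each case, with the convention $\vartheta_2(0)=\infty$ taking care of the zeros. For the inductive step with $j\geq 3$ and $k\geq 2$, the recurrence $m_{j,k}=2m_{j-1,k-1}-m_{j-2,k-1}$ from rule \eqref{MR4} together with the inductive hypothesis yields
\begin{align*}
\vartheta_2\bigl(2m_{j-1,k-1}\bigr) &\geq 1+\bigl(2(k-1)-(j-1)-1\bigr)=2k-j-1,\\
\vartheta_2\bigl(m_{j-2,k-1}\bigr) &\geq 2(k-1)-(j-2)-1=2k-j-1,
\end{align*}
and hence $\vartheta_2(m_{j,k})\geq 2k-j-1$, as required.

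The same argument handles the other two inequalities. For $n_{j,k}$, the base cases $n_{1,1}=2$, $n_{2,1}=1$, $n_{2,2}=8$, $n_{j,1}=0$ ($j\geq 3$) all meet the bound $4k-2j-1$, and in the recurrence $n_{j,k}=4n_{j-1,k-1}+n_{j-2,k-1}$ the factor $4=2^2$ contributes exactly the two extra powers of $2$ needed to cover the jump of $2$ in the bound as $j$ grows by $1$. Likewise, for $p_{j,k}$ the base cases $p_{1,1}=4$, $p_{2,1}=1$, $p_{2,2}=32$, $p_{j,1}=0$ ($j\geq 3$) satisfy $\vartheta_2\geq 6k-3j-1$, and the factor $8=2^3$ in $p_{j,k}=8p_{j-1,k-1}+p_{j-2,k-1}$ absorbs the slope $3$ in that bound. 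There is no serious obstacle: the constants $2$, $4$, $8$ in the three recurrences were tailored to the slopes $1$, $2$, $3$ of the linear bounds, so the induction goes through on the nose. The only small bookkeeping point is to treat $k=1$ and $j=2$ as base cases, since the recurrences only apply for $j\geq 3$, $k\geq 2$.
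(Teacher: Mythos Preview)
Your proof is correct and is precisely the induction that the paper alludes to with its one-line ``The proof follows from the properties \eqref{MR1}--\eqref{MR12} and induction.'' You have simply made explicit the base-case checks and the inductive step, so the approach is identical.
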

\begin{proof}
The proof follows from the properties \eqref{MR1}-\eqref{MR12} and induction.
\end{proof}

\begin{lemma}
For any integer $\alpha\geq1$ and $k\geq1$, we have
\begin{equation}\label{42}
\vartheta_2(x_{2\alpha-1, \, k+1}) \geq 3\alpha+2k-4
\end{equation}
and
\begin{equation}\label{43}
\vartheta_2(x_{2\alpha, \, k+1}) \geq 3\alpha+k.
\end{equation}
\end{lemma}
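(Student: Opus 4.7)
The plan is to prove \eqref{42} and \eqref{43} simultaneously by induction on $\alpha$, alternating the recursions \eqref{311} and \eqref{312} to pass from $\mathbf{x}_{2\alpha}$ to $\mathbf{x}_{2\alpha+1}$ and then to $\mathbf{x}_{2\alpha+2}$.

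The base case $\alpha = 1$ is a direct verification using the explicit expressions \eqref{E7} and \eqref{E123}: reading off the nonzero entries of $\mathbf{x}_1$ and $\mathbf{x}_2$ gives $\vartheta_2$ values $(1)$ and $(4,5,7,9)$ that meet or exceed the targets $2k - 1$ and $k + 3$ respectively.

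For the inductive step, assume both bounds hold at level $\alpha$. I would first establish \eqref{42} at $\alpha + 1$. From the definition of $B$, the recursion \eqref{312} reads $x_{2\alpha+1, k+1} = \sum_{j \geq 1} x_{2\alpha, j+1} \, m_{j, k}$, so combining the inductive hypothesis \eqref{43} with \eqref{41} shows each summand has $\vartheta_2 \geq (3\alpha + j) + (2k - j - 1) = 3(\alpha+1) + 2k - 4$; the $j$'s cancel, making this the easy half.

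For \eqref{43} at $\alpha + 1$ I would use \eqref{311}. The $i = 1$ contribution vanishes because $a_{1, k+1} = m_{1, k+1} = 0$ for $k \geq 1$, so after reindexing $i = j + 1$ and writing $a_{4j+1, k+1} = m_{8j+1, 4j+k+1}$, bound \eqref{41} supplies $\vartheta_2(a_{4j+1, k+1}) \geq 2k$, while the just-proved \eqref{42} at $\alpha + 1$ supplies $\vartheta_2(x_{2\alpha+1, j+1}) \geq 3\alpha + 2j - 1$. Each term therefore satisfies $\vartheta_2 \geq 3\alpha + 2j + 2k - 1$, which exceeds the required $3(\alpha+1) + k$ whenever $2j + k \geq 4$. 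The sole exception is the $(j, k) = (1, 1)$ summand, and this is the main obstacle. To dispatch it I would compute $a_{5, 2}$ directly from $m_{i, j} = 2 m_{i-1, j-1} - m_{i-2, j-1}$, obtaining $a_{5, 2} = m_{9, 6} = 2 m_{8, 5} - m_{7, 5} = -120$, so $\vartheta_2(a_{5, 2}) = 3$ rather than the loose bound $2$ coming from \eqref{41}. Combined with $\vartheta_2(x_{2\alpha+1, 2}) \geq 3\alpha + 1$ from \eqref{42} at $\alpha + 1$, this one term has $\vartheta_2 \geq 3\alpha + 4 = 3(\alpha+1) + 1$, closing the last gap and completing the induction.
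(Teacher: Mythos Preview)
Your proof is correct and follows essentially the same route as the paper: an alternating induction using \eqref{311} and \eqref{312}, with the generic term handled by combining \eqref{41} with the inductive hypothesis, and the single exceptional $(j,k)=(1,1)$ term in the passage to \eqref{43} dispatched by the explicit computation $m_{9,6}=-120$. The only cosmetic difference is that the paper phrases the induction as $(\ref{42}\text{ at }\alpha)\Rightarrow(\ref{43}\text{ at }\alpha)\Rightarrow(\ref{42}\text{ at }\alpha{+}1)$ rather than your $(\ref{43}\text{ at }\alpha)\Rightarrow(\ref{42}\text{ at }\alpha{+}1)\Rightarrow(\ref{43}\text{ at }\alpha{+}1)$, which is just a relabeling.
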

\begin{proof}
From the values in \eqref{E7} and \eqref{E123}, we note that \eqref{42} and \eqref{43} are true for $\alpha =1$,  and equality holds in each instance for $k=1$.\\
Suppose that \eqref{42} is true for some $\alpha\geq1$.\\
Consider the case $k=1$. In view of \eqref{311}, \eqref{E115} and definition of $M$,
\begin{align*}
x_{2\alpha, 2} &= \sum_{j\geq2} x_{2\alpha-1, j} \ m_{8j-7, 4j-2}\\
& = x_{2\alpha-1, 2} \ m_{9,6} + \sum_{j\geq2} x_{2\alpha-1, j+1} \ m_{8j+1, 4j+2}
\end{align*}
But 
\begin{equation*}
\vartheta_2(x_{2\alpha-1,2} \ m_{9,6})=3\alpha+1
\end{equation*}
and 
\begin{align*}
\vartheta_2(x_{2\alpha-1, j+1} \ m_{8j+1, 4j+2}) &\geq \min_{j\geq2} (\vartheta_2\left(x_{2\alpha-1, j+1})+\vartheta_2(m_{8j+1,4j+2})\right)\\
& \geq 3\alpha+2\geq3\alpha+1
\end{align*}
by hypothesis and \eqref{41}. Thus, $\vartheta_2(x_{2\alpha,2})=3\alpha+1$.\\
Let us now consider the case $k>1$. Then
\begin{align*}
\vartheta_2(x_{2\alpha, \, k+1} ) &= \vartheta_2\left(\sum_{j\geq1} x_{2\alpha-1, j+1} \ m_{8j+1,  4j+k+1} \right)\\
&\geq \min_{j\geq1} (\vartheta_2\left(x_{2\alpha-1, j+1})+\vartheta_2(m_{8j+1,4j+k+1})\right)\\
&\geq3\alpha+2k-2\geq3\alpha+k,
\end{align*}
which is \eqref{43}.

Now suppose \eqref{43} is true for some $\alpha\geq1$.\\
\begin{align*}
\vartheta_2(x_{2\alpha+1, \, k+1} ) &= \vartheta_2\left(\sum_{j\geq1} x_{2\alpha, j+1} \ m_{j,k} \right)\\
&\geq \min_{j\geq1} (\vartheta_2\left(x_{2\alpha, j+1})+\vartheta_2(m_{j,k})\right)\\
&\geq3\alpha+2k-1,
\end{align*}
which is \eqref{42} with $\alpha+1$ in place of $\alpha$. This completes the proof by induction.
\end{proof}
\begin{lemma}
For any integer $\alpha\geq1$ and $k\geq1$, we have
\begin{equation}\label{L14}
\vartheta_2(y_{\alpha, k+1}) \geq 3\alpha+3k+1
\end{equation}
and
\begin{equation}\label{L15}
\vartheta_2(z_{\alpha,k+1}) \geq 3\alpha+6k.
\end{equation}
\end{lemma}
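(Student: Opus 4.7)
The plan is to prove \eqref{L14} first, in the same spirit as the proofs of \eqref{42} and \eqref{43}, and then to derive \eqref{L15} as an almost immediate consequence.

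For \eqref{L14}, I would begin with the expansion coming from \eqref{313} and the definition of $C$:
\[
y_{\alpha,k+1}=\sum_{i\ge 1}x_{2\alpha-1,i}\,c_{4i-3,k+1}.
\]
Since $c_{1,k+1}=n_{1,k+1}=0$ for every $k\ge 1$, the $i=1$ contribution vanishes, and after re-indexing by $j=i-1$ the sum becomes
\[
y_{\alpha,k+1}=\sum_{j\ge 1}x_{2\alpha-1,j+1}\,n_{8j+1,\,4j+k+1}.
\]
Applying \eqref{42} to the $x$-factor and \eqref{44} to the $n$-factor, each summand satisfies
\[
\vartheta_2\bigl(x_{2\alpha-1,j+1}\,c_{4j+1,k+1}\bigr)\ge (3\alpha+2j-4)+(4k+1)=3\alpha+2j+4k-3,
\]
which already exceeds $3\alpha+3k+1$ whenever $2j+k\ge 4$. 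The only pair $(j,k)$ with $j,k\ge 1$ violating this inequality is $(j,k)=(1,1)$, so every summand except one is under control from the generic estimates alone.

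To close the remaining gap, I would treat the summand $x_{2\alpha-1,2}\,c_{5,2}$ by hand: using the recurrence \eqref{MR8} twice, one computes
\[
c_{5,2}=n_{9,6}=4\,n_{8,5}+n_{7,5}=4\cdot 128+448=960=2^{6}\cdot 15,
\]
so $\vartheta_2(n_{9,6})=6$, one more than the generic lower bound in \eqref{44}. Combined with $\vartheta_2(x_{2\alpha-1,2})\ge 3\alpha-2$ from \eqref{42}, this gives $\vartheta_2\bigl(x_{2\alpha-1,2}\,c_{5,2}\bigr)\ge 3\alpha+4=3\alpha+3(1)+1$, completing the proof of \eqref{L14}.

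For \eqref{L15}, I would use \eqref{314} together with the vanishing $d_{1,k+1}=0$ for $k\ge 1$ to write
\[
z_{\alpha,k+1}=\sum_{j\ge 1}y_{\alpha,j+1}\,p_{j,k}.
\]
Inserting \eqref{L14} into the $y$-factor and \eqref{45} into the $p$-factor shows that every summand has
\[
\vartheta_2\bigl(y_{\alpha,j+1}\,p_{j,k}\bigr)\ge (3\alpha+3j+1)+(6k-3j-1)=3\alpha+6k,
\]
which is \eqref{L15}.

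The sole obstacle is the isolated pair $(j,k)=(1,1)$ inside \eqref{L14}: the generic bounds \eqref{42} and \eqref{44} fall short of the claim by exactly one power of~$2$ at that point, and the proof hinges on verifying the slightly stronger divisibility $2^{6}\mid n_{9,6}$ by direct calculation from the recurrence. Once this single numerical input is in hand, the rest of the argument is purely mechanical, and \eqref{L15} is a formal consequence of \eqref{L14}.
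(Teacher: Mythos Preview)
Your proof is correct and follows essentially the same approach as the paper: express $y_{\alpha,k+1}$ via \eqref{313} as a sum over $j\ge 1$ of $x_{2\alpha-1,j+1}\,n_{8j+1,4j+k+1}$, bound all terms by \eqref{42} and \eqref{44}, and handle the borderline case $(j,k)=(1,1)$ by the explicit evaluation $n_{9,6}=960=2^{6}\cdot 15$; the paper organizes this as ``$k=1$ separately, then $k>1$'' rather than isolating a single $(j,k)$ pair, but the content is identical, and your derivation of \eqref{L15} from \eqref{L14} and \eqref{45} matches the paper's line for line.
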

\begin{proof}
In view of \eqref{313}, \eqref{E215} and definition of $N$,
\begin{align*}
y_{\alpha, 2} &= \sum_{j\geq2} x_{2\alpha-1, j} \ n_{8j-7, 4j-2}\\
& = x_{2\alpha-1, 2} \ n_{9,6} + \sum_{j\geq2} x_{2\alpha-1, j+1} \ n_{8j+1, 4j+2}.
\end{align*}
But 
\begin{equation*}
\vartheta_2(x_{2\alpha-1,2} \ n_{9,6})=3\alpha+4
\end{equation*}
and 
\begin{align*}
\vartheta_2(x_{2\alpha-1, j+1} \ n_{8j+1, 4j+2}) &\geq \min_{j\geq2} (\vartheta_2\left(x_{2\alpha-1, j+1})+\vartheta_2(n_{8j+1,4j+2})\right)\\
& \geq 3\alpha+4
\end{align*}
by hypothesis and \eqref{44}. Thus, $\vartheta_2(y_{\alpha,2})=3\alpha+4$.\\
Now suppose $k>1$. Then
\begin{align*}
\vartheta_2(y_{\alpha, \, k+1} ) &= \vartheta_2\left(\sum_{j\geq1} x_{2\alpha-1, j+1} \ n_{8j+1,  4j+k+1} \right)\\
&\geq \min_{j\geq1} (\vartheta_2\left(x_{2\alpha-1, j+1})+\vartheta_2(n_{8j+1,4j+k+1})\right)\\
&\geq3\alpha+4k-1\\
&\geq3\alpha+3k+1,
\end{align*}
which is \eqref{L14}.

From \eqref{314} and definition of $P$,
\begin{align*}
\vartheta_2(z_{\alpha, \, k+1} ) &= \vartheta_2\left(\sum_{j\geq1} y_{\alpha, j+1} \ p_{j,k} \right)\\
&\geq \min_{j\geq1} (\vartheta_2\left(y_{\alpha, j+1})+\vartheta_2(p_{j,k})\right)\\
&\geq3\alpha+6k,
\end{align*}
which is \eqref{L15}. This completes the proof.

\end{proof}
We are now ready to prove Theorems \ref{T2}-\ref{T4} and Corollaries \ref{C1} and \ref{C2}.
\begin{proof}[Proof of \eqref{E1}]
Since $f_i^{16}\equiv f_{2i}^8 \pmod{16}$, we have
\begin{equation*}
\frac{\varphi(q)^{4i}}{\varphi(-q)^{4i}} \equiv 1 \pmod{2^4}.
\end{equation*}
We have \eqref{T11} if $\alpha>0$,
\begin{equation*}
\frac{1}{2}+\sum_{n=1}^\infty v_0(2^{2\alpha+2}n)q^n = \sum_{j\geq1} x_{2\alpha-1,  j}  \frac{\varphi(q)^{4j-3}}{\varphi(-q)^{4j-4}}
\end{equation*}
and \eqref{42}, 
\begin{equation*}
\vartheta_2(x_{2\alpha-1, \, k+1}) \geq 3\alpha-2
\end{equation*}
for $k\geq1$.

Thus,
\begin{align*}
1+\sum_{n=1}^\infty v_0(2^{2\alpha+2}n)q^n &\equiv \frac{1}{2}+\frac{1}{2}\varphi(q) \pmod{2^{3\alpha+2}}\\
&\equiv 1+\sum_{k=1}^\infty q^{k^2}\pmod{2^{3\alpha+2}}.
\end{align*}
This completes the proof of \eqref{E1}.
\end{proof}
\begin{proof}[Proof of \eqref{E3}]
Similarly, we have
\begin{equation*}
\frac{\varphi(q^2)^{2i}}{\varphi(-q)^{2i}} \equiv 1 \pmod{2^2}
\end{equation*}
and \eqref{43},
\begin{equation*}
\vartheta_2(x_{2\alpha, \, k+1}) \geq 3\alpha+1
\end{equation*}
for $k\geq1$.

Thus, by \eqref{T12} if $\alpha>0$,
\begin{equation}\label{P1}
\frac{1}{2}+\sum_{n=1}^\infty v_0(2^{2\alpha+3}n)q^n \equiv  \frac{1}{2} \varphi(q^2) \pmod{2^{3\alpha+3}}
\end{equation}
and by \eqref{E1114},
\begin{equation}\label{P2}
\frac{1}{2}+\sum_{n=1}^\infty v_0(8n)q^n \equiv \frac{1}{2}\varphi(q^2) \pmod{2^3}.
\end{equation}
Congruence \eqref{E3} follows from \eqref{P1} and \eqref{P2}.
\end{proof}
\begin{proof}[Proofs of \eqref{E4} and \eqref{E5}]
We have \eqref{L14}, if $k\geq1$
\begin{equation*}
\vartheta_2(y_{\alpha, \, k+1}) \geq 3\alpha+3k+1
\end{equation*}
and
\begin{equation*}
\frac{1}{\varphi(-q)^{2i}} \equiv 1 \pmod{2^2}.
\end{equation*}
Thus, by \eqref{T13} if $\alpha>0$,
\begin{equation*}
\sum_{n=0}^\infty v_0(2^{2\alpha+2}(2n+1))q^n \equiv y_{\alpha,1} \ \psi(q^4) + y_{\alpha,2} \ q\psi(q^4)^3 \pmod{2^{3\alpha+6}},
\end{equation*}
which implies that
\begin{equation}\label{315}
\sum_{n=0}^\infty v_0(2^{2\alpha+2}(4n+3))q^n \equiv  y_{\alpha,2} \ \psi(q^2)^3 \pmod{2^{3\alpha+6}}.
\end{equation}
Congruences \eqref{E4} and \eqref{E5} follow from \eqref{S13} and \eqref{315}.
\end{proof}
\begin{proof}[Proofs of \eqref{E55} -- \eqref{E9}]
We have \eqref{L15}, if $k\geq1$
\begin{equation*}
\vartheta_2(z_{\alpha, \, k+1}) \geq 3\alpha+6k
\end{equation*}
and
\begin{equation*}
\frac{1}{\varphi(-q)^{4i}} \equiv 1 \pmod{2^3}.
\end{equation*}
Thus, by \eqref{T00} and \eqref{T14} if $\alpha>0$,
\begin{equation}\label{318}
\sum_{n=0}^\infty v_0(16n+4)q^n \equiv 3  \psi(q^2) + 64 q\psi(q^2)^5 \pmod{2^{9}}
\end{equation}
and
\begin{equation}\label{316}
\sum_{n=0}^\infty v_0(2^{2\alpha+2}(4n+1))q^n \equiv z_{\alpha,1} \ \psi(q^2) + z_{\alpha,2} \ q\psi(q^2)^5 \pmod{2^{3\alpha+9}}.
\end{equation}
Congruence \eqref{E55} follows from \eqref{318} and \eqref{316}. Extracting terms involving even powers of $q$, we obtain
\begin{equation}\label{319}
\sum_{n=0}^\infty v_0(32n+4)q^n \equiv 3 \psi(q) \pmod{2^{9}}.
\end{equation}
and
\begin{equation}\label{317}
\sum_{n=0}^\infty v_0(2^{2\alpha+2}(8n+1))q^n \equiv z_{\alpha,1} \ \psi(q) \pmod{2^{3\alpha+9}}.
\end{equation}
Congruence \eqref{E6} is immediate from \eqref{EE11}, \eqref{EE2}, \eqref{319} and \eqref{317}. Congruences \eqref{E8} and \eqref{E9} follow from \eqref{EE3}, \eqref{EE4}, \eqref{319} and \eqref{317}.
\end{proof}
\section{Congruences involving other moduli}
In this section, we establish some congruences modulo 13, 25 and 27. In order to do so, certain identities involving the generating function for $v_0(n)$ are obtained.
\begin{theorem}
We have
\begin{equation}\label{T5}
\sum_{n=0}^\infty v_0(32n+4)q^n = 3\psi(q)+512q\frac{\psi(q)^9}{\varphi(-q)^8}.
\end{equation}
\end{theorem}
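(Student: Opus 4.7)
The plan is to apply the Huffing operator $H$ to both sides of \eqref{T00}. Since $32n+4 = 16(2n)+4$, the sequence $v_0(32n+4)$ is precisely the even-indexed subsequence of $v_0(16n+4)$, so applying $H$ yields
\begin{equation*}
\sum_{n=0}^\infty v_0(32n+4)\, q^{2n} = 3\psi(q^2) + 64\, H\!\left(q\,\frac{\psi(q^2)^5}{\varphi(-q)^4}\right),
\end{equation*}
because $\psi(q^2)$ is a series in $q^2$ and hence invariant under $H$. The entire task reduces to evaluating this second Huffing.

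Since $\psi(q^2)^5$ is a series in $q^2$, it factors through $H$, leaving $\psi(q^2)^5 \cdot H(q/\varphi(-q)^4)$. By \eqref{L11} one has $1/\varphi(-q)^4 = \varphi(q)^4/\varphi(-q^2)^8$, and since $\varphi(-q^2)^8$ is likewise a series in $q^2$,
\begin{equation*}
H\!\left(\frac{q}{\varphi(-q)^4}\right) = \frac{H(q\,\varphi(q)^4)}{\varphi(-q^2)^8}.
\end{equation*}
The Huffing $H(q\,\varphi(q)^4)$ equals $q$ times the odd-power part of $\varphi(q)^4$. From \eqref{L13}, applying $q\mapsto\pm q$ gives $\varphi(q)^2 + \varphi(-q)^2 = 2\varphi(q^2)^2$ and $\varphi(q)^2 - \varphi(-q)^2 = 8q\psi(q^4)^2$; multiplying these yields $\varphi(q)^4 - \varphi(-q)^4 = 16q\,\psi(q^4)^2\varphi(q^2)^2$, so $H(q\,\varphi(q)^4) = 8\, q^2 \psi(q^4)^2\varphi(q^2)^2$.

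Finally, the classical product identity $\psi(q^2)^2 = \varphi(q^2)\psi(q^4)$ (an immediate consequence of Jacobi's triple product) collapses $\psi(q^4)^2\varphi(q^2)^2$ into $\psi(q^2)^4$, giving $H(q\,\varphi(q)^4) = 8q^2\psi(q^2)^4$, and therefore $H(q\,\psi(q^2)^5/\varphi(-q)^4) = 8q^2\psi(q^2)^9/\varphi(-q^2)^8$. Substituting back,
\begin{equation*}
\sum_{n=0}^\infty v_0(32n+4)\, q^{2n} = 3\psi(q^2) + 512\, q^2\,\frac{\psi(q^2)^9}{\varphi(-q^2)^8},
\end{equation*}
and the final substitution $q^2 \mapsto q$ delivers \eqref{T5}. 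The only real obstacle is keeping track of which factors may be pulled across $H$ as series in $q^2$; every non-trivial simplification rests on the theta identities \eqref{L11}, \eqref{L13}, and the product relation $\psi(q^2)^2 = \varphi(q^2)\psi(q^4)$.
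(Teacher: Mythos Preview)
Your proof is correct and follows essentially the same route as the paper: both start from \eqref{T00}, rewrite $1/\varphi(-q)^4$ via \eqref{L11}, use \eqref{L13} (together with the relation $\psi(q^2)^2=\varphi(q^2)\psi(q^4)$) to isolate the odd part of $\varphi(q)^4$, extract even powers, and replace $q^2$ by $q$. The only cosmetic difference is that the paper expands $\varphi(q)^4$ fully before picking off the relevant term, whereas you compute the odd part directly via $\varphi(q)^4-\varphi(-q)^4$.
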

\begin{proof}
In view of \eqref{T00}, \eqref{L11} and \eqref{L13},
\begin{align*}
\sum_{n=0}^\infty v_0(16n+4)q^n &= 3\psi(q^2)+64q\frac{\psi(q^2)^5}{\varphi(-q^2)^8}\ \varphi(q)^4 \\
&=3\psi(q^2)+64q\frac{\psi(q^2)^5}{\varphi(-q^2)^8} \ (\varphi(q^2)^4+16q^2\psi(q^4)^4+8q\psi(q^2)^4).
\end{align*}
Which yields \eqref{T5}
\end{proof}
\begin{theorem}\label{T6}
For any integer $n\geq0,$
\begin{equation}\label{S23}
v_0(416n+r)\equiv 0\pmod{13}, \quad r\in\{132, 164, 228, 292, 356, 388\}. 
\end{equation}
\end{theorem}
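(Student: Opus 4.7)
The plan is to start from Theorem \eqref{T5}, which gives
\begin{equation*}
\sum_{n=0}^\infty v_0(32n+4)q^n = 3\psi(q)+512q\frac{\psi(q)^9}{\varphi(-q)^8},
\end{equation*}
and observe that $416 = 13\cdot 32$ and each of the six values $r\in\{132,164,228,292,356,388\}$ satisfies $r\equiv 4\pmod{32}$, with $(r-4)/32$ running over the set $K=\{4,5,7,9,11,12\}$. Hence \eqref{S23} is equivalent to the assertion that the coefficient of $q^m$ in the displayed series vanishes modulo $13$ whenever $m\equiv k\pmod{13}$ for some $k\in K$.

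To control the right-hand side modulo $13$, I would first rewrite the rational expression in terms of $f_i$'s. Since $\psi(q)=f_2^2/f_1$ and $\varphi(-q)=f_1^2/f_2$, one has $\psi(q)^9/\varphi(-q)^8 = f_2^{26}/f_1^{25}$. Multiplying top and bottom by $f_1$ and applying the congruence $f_i^{13}\equiv f_{13i}\pmod{13}$, I would obtain
\begin{equation*}
\frac{\psi(q)^9}{\varphi(-q)^8} \;=\; f_1\cdot\frac{f_2^{26}}{f_1^{26}} \;\equiv\; f_1\cdot\frac{f_{26}^{2}}{f_{13}^{2}} \pmod{13}.
\end{equation*}
Combined with $512\equiv 5\pmod{13}$, this yields
\begin{equation*}
\sum_{n=0}^\infty v_0(32n+4)q^n \;\equiv\; 3\psi(q) \;+\; 5\,q\,f_1\cdot\frac{f_{26}^{2}}{f_{13}^{2}} \pmod{13},
\end{equation*}
in which the factor $f_{26}^2/f_{13}^2$ is a power series in $q^{13}$ and therefore preserves exponent residues modulo $13$.

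The key remaining step is a simple observation on residues of triangular and pentagonal numbers modulo $13$. From $8T_n+1=(2n+1)^2$ and the fact that the quadratic residues mod $13$ are $\{0,1,3,4,9,10,12\}$, one finds that triangular numbers $n(n+1)/2$ lie in $\{0,1,2,3,6,8,10\}\pmod{13}$. Likewise, from $24P_n+1=(6n-1)^2$, the generalized pentagonal numbers $n(3n-1)/2$ lie in $\{0,1,2,5,7,9,12\}\pmod{13}$, so the exponents of $qf_1=\sum(-1)^n q^{n(3n-1)/2+1}$ lie in $\{0,1,2,3,6,8,10\}\pmod{13}$. Consequently, neither $\psi(q)$ nor $qf_1\cdot(f_{26}^{2}/f_{13}^{2})$ contributes any term whose exponent is congruent to an element of $K=\{4,5,7,9,11,12\}$ modulo $13$.

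Putting these pieces together, the coefficient of $q^{13n+k}$ in $\sum_{m\geq 0}v_0(32m+4)q^m$ is divisible by $13$ for every $k\in K$; translating $m=13n+k$ back via $32m+4=416n+(32k+4)$ and checking that $32k+4$ traverses exactly $\{132,164,228,292,356,388\}$ as $k$ runs through $K$ yields \eqref{S23}. The only real obstacle is the residue-class bookkeeping, namely verifying that the missing residue classes of triangular numbers coincide with $K$; this I would package as two short quadratic-residue computations carried out once and for all.
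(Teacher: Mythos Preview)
Your proposal is correct and follows essentially the same approach as the paper: both start from Theorem~\ref{T5}, reduce $\psi(q)^9/\varphi(-q)^8=f_2^{26}/f_1^{25}$ modulo $13$ via $f_k^{13}\equiv f_{13k}$, and then check that neither the triangular-number exponents in $\psi(q)$ nor the (shifted) pentagonal-number exponents in $qf_1$ hit the residue classes $\{4,5,7,9,11,12\}$ modulo $13$. Your write-up is in fact slightly more explicit than the paper's in justifying the residue sets via the relations $8T_n+1=(2n+1)^2$ and $24P_n+1=(6n-1)^2$.
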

\begin{proof}
By the binomial theorem, it is easy to verify that 
\begin{equation*}
f_{k}^{13}\equiv f_{13k} \pmod{13}
\end{equation*}
for any positive integer $k$. Thus
\begin{equation}\label{E221}
\frac{\psi(q)^9}{\varphi(-q)^8} = \frac{f_2^{26}}{f_1^{25}} \equiv f_1 \frac{f_{26}^2}{f_{13}^2} \pmod{13}.
\end{equation}
Using \eqref{E221}, it follows that
\begin{align*}
\sum_{n=0}^\infty v_0(32n+4)q^n &\equiv 3\psi(q)+512qf_1 \frac{f_{26}^2}{f_{13}^2} \pmod{13}\\
&\equiv 3 \sum_{n=0}^\infty q^{n(n+1)/2}+5q\sum_{n=-\infty}^\infty (-1)^n q^{n(3n-1)/2} \frac{f_{26}^2}{f_{13}^2} \pmod{13}.
\end{align*}
Since $n(n+1)/2\not\equiv4, 5, 7, 9, 11, 12 \pmod{13}$ for any positive integer $n$ and $n(3n-1)/2 \not\equiv 3, 4, 6, 8, 10, 11 \pmod{13}$ for any integer $n$. The coefficients of $q^{13n+r}$ for $r\in \{4, 5, 7, 9, 11, 12\}$ in $\sum_{n=0}^\infty v_0(32n+4)q^n$ are congruent to $0 \pmod{13}$. This completes the proof.
\end{proof}

\begin{theorem}\label{T7}
For any integer $n\geq0,$
\begin{equation}\label{S24}
v_0(160n+r)\equiv 0\pmod{25}, \quad r\in\{68, 132\}. 
\end{equation}
\end{theorem}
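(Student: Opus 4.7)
The plan is to follow the same strategy as the proof of Theorem~\ref{T6}, starting from identity~\eqref{T5} and isolating the appropriate arithmetic progressions of coefficients. Since $160n+68 = 32(5n+2)+4$ and $160n+132 = 32(5n+4)+4$, the claim~\eqref{S24} reduces to showing that the coefficients of $q^{5n+2}$ and $q^{5n+4}$ on the left side of \eqref{T5} are divisible by~$25$.

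The key preliminary is the refinement $f_1^{25}\equiv f_5^5\pmod{25}$. This follows by writing $(1-x)^5 = (1-x^5) + 5g(x)$ with $g(x)=-x+2x^2-2x^3+x^4$ and raising both sides to the fifth power: each cross term in the binomial expansion of $\bigl((1-x^5)+5g(x)\bigr)^5$ carries a factor $\binom{5}{i}5^i$, which is divisible by~$25$ for every $i\geq 1$, so $(1-x)^{25}\equiv(1-x^5)^5\pmod{25}$; multiplying the resulting congruences for $x=q^k$, $k\geq 1$, gives $f_1^{25}\equiv f_5^5\pmod{25}$. Combined with $512\equiv 12\pmod{25}$, identity~\eqref{T5} becomes
\begin{equation*}
\sum_{n=0}^\infty v_0(32n+4)q^n \;\equiv\; 3\psi(q) \;+\; 12\,q\, f_2\,\frac{f_{10}^5}{f_5^5} \pmod{25}.
\end{equation*}

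Next I would tabulate the residues mod~$5$ of the exponents on the right-hand side. The triangular numbers $n(n+1)/2$ occupy only the residue classes $\{0,1,3\}\pmod 5$, so $\psi(q)$ has no coefficient at $q^{5n+2}$ or $q^{5n+4}$. By Euler's pentagonal number theorem $f_2=\sum_{n\in\mathbb{Z}}(-1)^n q^{n(3n-1)}$, and a direct check shows $n(3n-1)\pmod 5\in\{0,2,4\}$, so the exponents of $q f_2$ lie in $\{0,1,3\}\pmod 5$. Because $f_{10}^5/f_5^5$ is a power series in $q^5$, multiplication by it preserves the residue class of the exponent, and the whole second term contributes only to residues $\{0,1,3\}\pmod 5$. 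Thus neither summand contributes in the $5n+2$ or $5n+4$ classes, which yields~\eqref{S24}.

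The only nontrivial step in this outline is the congruence $f_1^{25}\equiv f_5^5\pmod{25}$, which is slightly stronger than the familiar Frobenius-type congruence $f_k^{13}\equiv f_{13k}\pmod{13}$ employed in the proof of Theorem~\ref{T6}; once that is established, the remaining residue bookkeeping is routine.
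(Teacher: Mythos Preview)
Your proposal is correct and follows essentially the same route as the paper: reduce \eqref{T5} modulo~$25$ via $f_2^{26}/f_1^{25}\equiv f_2\,f_{10}^5/f_5^5$, then check that neither $\psi(q)$ nor $q f_2$ (and hence $q f_2 f_{10}^5/f_5^5$) contributes exponents congruent to $2$ or $4$ modulo~$5$. Your explicit justification of $f_1^{25}\equiv f_5^5\pmod{25}$ is more detailed than the paper's ``by the binomial theorem, it is immediate,'' but the argument is otherwise identical.
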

\begin{proof}
By the binomial theorem, it is immediate that
\begin{equation}\label{E222}
\frac{\psi(q)^9}{\varphi(-q)^8} = \frac{f_2^{26}}{f_1^{25}} \equiv f_2 \frac{f_{10}^5}{f_5^5} \pmod{25}.
\end{equation}
Using \eqref{E222}, it follows that
\begin{align*}
\sum_{n=0}^\infty v_0(32n+4)q^n &\equiv 3\psi(q)+512qf_2 \frac{f_{10}^5}{f_5^5} \pmod{25}\\
&\equiv 3 \sum_{n=0}^\infty q^{n(n+1)/2}+12q\sum_{n=-\infty}^\infty (-1)^n q^{n(3n-1)} \frac{f_{10}^5}{f_5^5} \pmod{25}.
\end{align*}
Since $n(n+1)/2\not\equiv 2, 4 \pmod{5}$ for any positive integer $n$ and $n(3n-1) \not\equiv 1, 3 \pmod{5}$ for an integer $n$, we have that the coefficients of $q^{5n+2}$ and $q^{5n+4}$ in $\sum_{n=0}^\infty v_0(32n+4)q^n$ are congruent to $0 \pmod{25}$.
\end{proof}
\begin{theorem}\label{T10}
For any nonnegative integer $n$, we have
\begin{align}
v_0(3^2(96n+68)) &\equiv  0\pmod{27}, \label{E22}\\
v_0(3^3(96n+44)) &\equiv 0 \pmod{27}, \label{E23}\\
v_0(3^3(96n+76)) &\equiv 0 \pmod{27}. \label{E24}
\end{align}
	
\end{theorem}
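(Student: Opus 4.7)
The plan is to follow the template of the proofs of Theorems~\ref{T6}--\ref{T7}: reduce~\eqref{T5} modulo~$27$ and then isolate the relevant arithmetic progressions by successive dissections. The congruence $(1-q)^{27}\equiv(1-q^3)^9\pmod{27}$ implies $f_k^{27}\equiv f_{3k}^9\pmod{27}$, and therefore
\[
\frac{\psi(q)^9}{\varphi(-q)^8}=\frac{f_2^{26}}{f_1^{25}}\equiv\varphi(-q)\,\frac{f_6^9}{f_3^9}\pmod{27};
\]
combined with $512\equiv-1\pmod{27}$ this gives
\[
\sum_{n\geq0}v_0(32n+4)q^n\equiv 3\psi(q)-q\,\varphi(-q)\,\frac{f_6^9}{f_3^9}\pmod{27}.
\]
Applying the $3$-dissections $\psi(q)=f(q^3,q^6)+q\psi(q^9)$ and $\varphi(-q)=\varphi(-q^9)-2qf(-q^3,-q^{15})$, extracting the terms with exponent $\equiv 1\pmod 3$ and sending $q^3\mapsto q$, one obtains
\[
\sum_{m\geq0}v_0(96m+36)q^m\equiv 3\psi(q^3)-\varphi(-q^3)\,\frac{f_2^9}{f_1^9}\pmod{27}.
\]

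To simplify the second term, I would introduce Borwein's cubic theta function $b(q)=f_1^3/f_3$. Since $f_1^9=f_3^3b(q)^3$, $f_2^9=f_6^3b(q^2)^3$, $\varphi(-q^3)=f_3^2/f_6$ and $\psi(q^3)=f_6^2/f_3$, one has the exact identity
\[
\varphi(-q^3)\,\frac{f_2^9}{f_1^9}=\psi(q^3)\,\frac{b(q^2)^3}{b(q)^3}.
\]
Because $b(q)\equiv 1\pmod 3$, writing $b(q)=1+3e(q)$ yields $b(q)^3\equiv 1+9e(q)\pmod{27}$ and hence $\tfrac{b(q^2)^3}{b(q)^3}\equiv 1+9\bigl(e(q^2)-e(q)\bigr)\pmod{27}$. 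Substituting back,
\[
\sum_{m\geq0}v_0(96m+36)q^m\equiv 2\psi(q^3)+9\psi(q^3)\bigl(e(q)-e(q^2)\bigr)\pmod{27}.
\]

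Each of~\eqref{E22}--\eqref{E24} then amounts to extracting $[q^{m_k}]$ of this series for $m_k\in\{9n+6,\,27n+12,\,27n+21\}$. In each case $m_k\equiv 0\pmod 3$ and $m_k/3\in\{3n+2,\,9n+4,\,9n+7\}$ lies outside the set $\{0,1,3,6\}\pmod 9$ of triangular-number residues, so the $2\psi(q^3)$ contribution vanishes outright and the congruence reduces to $[q^{m_k}]\psi(q^3)(e(q)-e(q^2))\equiv 0\pmod 3$. The main obstacle is precisely this last divisibility: the cleanest route is to exploit the identity $e(q)-e(q^2)=-U(q)+3U(q^3)$, where $U(q)=T(q)-T(q^2)$ and $6T(q)=a(q)-1$ with $a(q)=1+6\sum_{n\geq1}\chi_3(n)q^n/(1-q^n)$ Borwein's cubic theta function, so that $U(q)=\sum_{n\geq1}\chi_3(n)\,q^n/(1-q^{2n})$. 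Modulo $3$ this further reduces the task to $[q^{m_k}]\psi(q^3)U(q)\equiv 0\pmod 3$, which should fall out of explicit $9$- and $27$-dissections of $\psi(q^3)U(q)$ performed via the Lambert-series expression for $U(q)$ together with the triangular structure of $\psi(q^3)=f_6^2/f_3$; numerical experiments in fact indicate that the coefficient of $q^{9n+6}$ in $\psi(q^3)(b(q)-b(q^2))$ vanishes identically and those of $q^{27n+12}$, $q^{27n+21}$ are divisible by $9$, which is more than enough to conclude.
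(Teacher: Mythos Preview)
Your reduction is correct and, through the identity
\[
\sum_{m\geq0}v_0(96m+36)q^m\equiv 3\psi(q^3)-\varphi(-q^3)\,\frac{f_2^9}{f_1^9}\pmod{27},
\]
it coincides exactly with the paper's \eqref{E34}. The divergence, and the gap, is in what follows. After rewriting $\varphi(-q^3)f_2^9/f_1^9=\psi(q^3)\,b(q^2)^3/b(q)^3$ and expanding modulo $27$, you reduce the three congruences to the single claim that $[q^{m_k}]\,\psi(q^3)\bigl(e(q)-e(q^2)\bigr)\equiv 0\pmod 3$ for $m_k\in\{9n+6,\,27n+12,\,27n+21\}$. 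But you do not prove this: the sentence ``should fall out of explicit $9$- and $27$-dissections \dots\ numerical experiments in fact indicate'' is exactly where an argument is needed and none is given. The Lambert-series description of $U(q)$ does not by itself yield the required $3$-adic vanishing; you would still need a genuine dissection identity for $b(q)$ or for $f_2^3/f_1^3$.

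The paper supplies precisely that missing ingredient. Instead of passing through $b(q)$, it invokes Toh's explicit $3$-dissection
\[
\frac{f_2^3}{f_1^3}=\frac{f_6}{f_3}+3q\frac{f_6^4f_9^5}{f_3^8f_{18}}+6q^2\frac{f_6^3f_9^2f_{18}^2}{f_3^7}+12q^3\frac{f_6^2f_{18}^5}{f_3^6f_9},
\]
cubes it, and substitutes into \eqref{E34}. Extracting the $q^{3n}$ part modulo $27$ gives the closed form
\[
\sum_{n\geq0}v_0(288n+36)q^n\equiv 2\psi(q)-9q\psi(q^9)\pmod{27}.
\]
One more application of $\psi(q)=f(q^3,q^6)+q\psi(q^9)$ then finishes everything: the $q^{3n+2}$ part vanishes, giving \eqref{E22}; the $q^{3n+1}$ part is $-7q\psi(q^9)$, so $\sum_n v_0(9(96n+36))q^n\equiv-7\psi(q^3)\pmod{27}$, and the vanishing of its $q^{3n+1}$ and $q^{3n+2}$ coefficients yields \eqref{E23} and \eqref{E24}. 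Your Borwein reformulation is equivalent in principle, but without a concrete dissection identity it cannot be completed; the Toh identity is the step you are missing.
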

\begin{theorem}\label{T8}
	Let $p\geq3$ be any prime. Then for any nonnegative integer $n$ and $\alpha$, we have
	\begin{equation}\label{E25}
	v_0(p^{2\alpha}\ 9^{2}(32n+4))  \equiv 0 \pmod{27}, \quad \text{if} \,\, n \, \text{is not represented as a triangular number},
	\end{equation}
	whence
	\begin{equation}\label{E26}
	v_0(p^{2\alpha+1}\ 9^{2}(32pn+32i+4p))  \equiv 0 \pmod{27},\,\ \text{where}\,\ i\in \{1, 2, 3, \ldots, p-1\}
	\end{equation}
	and
	\begin{equation}\label{E27}
	v_0(p^{2\alpha}\ 9^{2}(32pn+32j+4))  \equiv 0 \pmod{27},
	\end{equation}
	whenever $8j+1$ is quadratic nonresidue modulo $p$.
\end{theorem}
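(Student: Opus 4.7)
The plan is to establish a generating function identity of the form
\begin{equation*}
\sum_{n=0}^\infty v_0\bigl(9^{2}(32n+4)\bigr)\,q^{n}\equiv c\,\psi(q)\pmod{27}
\end{equation*}
for some integer $c$, and then deduce the three congruences \eqref{E25}, \eqref{E26}, \eqref{E27} from the $\psi$-dissection identities \eqref{EE2}, \eqref{EE3}, \eqref{EE4} in exactly the same way that \eqref{E6}, \eqref{E8}, \eqref{E9} were deduced from those identities in the proof of Theorem \ref{T4}.

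The natural starting point is \eqref{T5}, which states $\sum v_0(32n+4)q^{n}=3\psi(q)+512\,q\,\psi(q)^{9}/\varphi(-q)^{8}$. Since $512\equiv-1\pmod{27}$, and the basic congruence $f_{k}^{3}\equiv f_{3k}\pmod{3}$ iterates (by cubing twice and noting that the cross-terms carry factors of $9$ and $27$) to $f_{k}^{27}\equiv f_{3k}^{9}\pmod{27}$, one obtains
\begin{equation*}
\frac{\psi(q)^{9}}{\varphi(-q)^{8}}=\frac{f_{2}^{26}}{f_{1}^{25}}=\frac{f_{2}^{27}}{f_{1}^{27}}\cdot\frac{f_{1}^{2}}{f_{2}}\equiv\varphi(-q)\,\frac{f_{6}^{9}}{f_{3}^{9}}\pmod{27}.
\end{equation*}
This form is useful because the factor $f_{6}^{9}/f_{3}^{9}$ is already a power series in $q^{3}$, which makes the eventual coefficient extraction modulo $9$ and then modulo $81$ tractable.

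Since $v_{0}(9^{2}(32n+4))$ is precisely the coefficient of $q^{81n+10}$ in $\sum_{m}v_{0}(32m+4)q^{m}$, the next step is a careful two-stage $3$-dissection of the resulting mod $27$ identity. For this I would use $\psi(q)=f(q^{6},q^{3})+q\psi(q^{9})$, the analogous dissection $\varphi(-q)=\varphi(-q^{9})-2q\,f(-q^{3},-q^{15})$, and the intermediate congruence $f_{k}^{9}\equiv f_{3k}^{3}\pmod{9}$ to approximate $f_{6}^{9}/f_{3}^{9}$ by a series in $q^{9}$. A bookkeeping of which residue classes modulo $9$ can contribute to the exponent $81n+10$ shows that most cross-terms die; after replacing $q^{81}$ by $q$, the surviving contributions should collapse to an integer multiple of $\psi(q)$ modulo $27$.

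The main obstacle lies in the mod $27$ accounting of this extraction: the identity $f_{k}^{9}\equiv f_{3k}^{3}\pmod{9}$ is only a congruence modulo $9$, so explicit $9$-adic correction terms of the form $(f_{k}^{3}-f_{3k})/3$ enter, and these must be shown to line up across the two rounds of $3$-dissection so that only the clean $c\,\psi(q)$ contribution survives modulo $27$. Once the displayed identity is in hand, \eqref{E25} follows from \eqref{EE2} because $\psi(q)$ is supported on triangular exponents, while \eqref{E26} and \eqref{E27} follow from \eqref{EE3} and \eqref{EE4} respectively, by the same index-translation argument used to pass from the generating function identity \eqref{317} to the congruences \eqref{E8} and \eqref{E9}.
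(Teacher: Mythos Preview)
Your overall strategy matches the paper's: establish
\[
\sum_{n\ge0} v_0\bigl(9^{2}(32n+4)\bigr)\,q^{n}\equiv c\,\psi(q)\pmod{27}
\]
(the paper finds $c=-7$) and then invoke \eqref{EE2}--\eqref{EE4} exactly as in the proof of Theorem~\ref{T4}. Your reduction $\dfrac{f_2^{26}}{f_1^{25}}\equiv\varphi(-q)\,\dfrac{f_6^{9}}{f_3^{9}}\pmod{27}$ is correct and is precisely what the paper uses (implicitly) in passing from \eqref{T5} to the intermediate identity \eqref{E34}.

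Two points of comparison. First, a small slip: since $81=3^{4}$ and $10=(101)_{3}$, reaching the coefficient of $q^{81n+10}$ requires \emph{four} successive $3$-dissections, not two. The paper carries these out as \eqref{T5}$\to$\eqref{E34}$\to$\eqref{E36}$\to$\eqref{E38}$\to$\eqref{E40}.

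Second, and more substantively, the obstacle you correctly anticipate---the $9$-adic correction terms arising because $f_k^{9}\equiv f_{3k}^{3}$ holds only modulo $9$---is exactly what the paper sidesteps. Rather than approximate $f_2^{9}/f_1^{9}$ modulo $9$ and track corrections, the paper invokes the \emph{exact} $3$-dissection
\[
\frac{f_2^{3}}{f_1^{3}}=\frac{f_6}{f_3}+3q\,\frac{f_6^{4}f_9^{5}}{f_3^{8}f_{18}}+6q^{2}\,\frac{f_6^{3}f_9^{2}f_{18}^{2}}{f_3^{7}}+12q^{3}\,\frac{f_6^{2}f_{18}^{5}}{f_3^{6}f_9}
\]
(equation \eqref{E35}, taken from Toh), cubes it, and reads off the residue-class contributions modulo $27$ directly. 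This produces the clean intermediate \eqref{E36} with no correction bookkeeping. Your route would work in principle, but the paper's use of an exact dissection identity is what keeps the computation short; you may wish to adopt it.
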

\begin{theorem}\label{T9}
Let $p\geq3$ and $p_1 \geq 5$ be any prime. Then for any nonnegative integer $n$ and $\alpha$, we have
\begin{equation}\label{E28}
v_0(p^{2\alpha} (96n+4))  \equiv 0 \pmod{27}, \quad \text{if} \,\, n\neq k(3k+1)/2,\,\ k\in \mathbb{Z},
\end{equation}
whence
\begin{equation}\label{E29}
v_0(p^{2\alpha} p_1(96p_1n+96i+4p_1))  \equiv 0 \pmod{27},\,\ \text{where}\,\ i\in \{1, 2, 3, \ldots, p_1-1\}
\end{equation}
and
\begin{equation}\label{E30}
v_0(p^{2\alpha} (96p_1n+96j+4))  \equiv 0 \pmod{27},
\end{equation}
 whenever $24j+1$ is quadratic nonresidue modulo $p_1$.
\end{theorem}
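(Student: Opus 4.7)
The plan is to reduce Theorem \ref{T5} modulo 27 and exhibit a clean identity
\[
\sum_{n\geq 0} v_0(96n+4)\,q^n \;\equiv\; 3\,f(q,q^2) \pmod{27}, \qquad(\star)
\]
from which all three congruences will drop out by elementary number theory. To obtain $(\star)$, I use $512 \equiv -1\pmod{27}$ together with the binomial congruence $(1-q^n)^{27}\equiv (1-q^{3n})^{9}\pmod{27}$ (so that $f_k^{27}\equiv f_{3k}^{9}\pmod{27}$) to rewrite
\[
\frac{\psi(q)^{9}}{\varphi(-q)^{8}} \;=\; \frac{f_2^{26}}{f_1^{25}} \;=\; \frac{f_1^{2}}{f_2}\cdot\frac{f_2^{27}}{f_1^{27}} \;\equiv\; \varphi(-q)\,(-q^3;q^3)_\infty^{9} \pmod{27},
\]
whence Theorem \ref{T5} becomes
\[
\sum_{n\geq 0} v_0(32n+4)\,q^n \;\equiv\; 3\psi(q) \;-\; q\,\varphi(-q)\,(-q^3;q^3)_\infty^{9} \pmod{27}.
\]
Since $q\,\varphi(-q)=\sum_{k\in\mathbb{Z}}(-1)^k q^{k^2+1}$ has only exponents $\equiv 1,2\pmod{3}$ (never $0$) while $(-q^3;q^3)_\infty^{9}$ is a series in $q^3$, the second summand contributes nothing in positions $\equiv 0\pmod{3}$. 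Extracting the $q^{3n}$-terms and substituting $q^3\mapsto q$ reduces matters to the trisection $D_0[\psi(q)]$; splitting $\psi(q)=\sum_{k\geq 0}q^{k(k+1)/2}$ by $k\pmod{3}$ and reindexing the $k\equiv 2$ branch via $m=-(k+1)/3$ identifies $D_0[\psi(q)] = \sum_{m\in\mathbb{Z}}q^{m(3m-1)/2} = f(q,q^2)$. This establishes $(\star)$, and with $a'(n):=[q^n]f(q,q^2)$ (which equals $1$ exactly when $24n+1$ is a perfect square) it already proves \eqref{E28} in the case $\alpha=0$.

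For $\alpha\geq 1$ with $p\geq 5$ the argument becomes essentially formal: $\gcd(p,24)=1$ forces $p^{2\alpha}\equiv 1\pmod{24}$, so $M:=p^{2\alpha}n+(p^{2\alpha}-1)/24\in\mathbb{Z}$ and $p^{2\alpha}(96n+4)=96M+4$. Applying $(\star)$ and noting that $24M+1 = p^{2\alpha}(24n+1)$ is a perfect square if and only if $24n+1$ is, we get $a'(M)=a'(n)$ and hence $v_0\bigl(p^{2\alpha}(96n+4)\bigr)\equiv 3a'(n)\pmod{27}$, completing \eqref{E28}. Corollaries \eqref{E29} and \eqref{E30} then reduce to residue analysis modulo $p_1$. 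For \eqref{E30}, if $24j+1$ is a quadratic nonresidue modulo $p_1$ then $24(p_1n+j)+1 \equiv 24j+1\pmod{p_1}$ is also a nonresidue and so not a perfect square in $\mathbb{Z}$, forcing $a'(p_1n+j)=0$. For \eqref{E29}, the relevant index $M'=p_1^2 n + p_1 i + (p_1^2-1)/24$ satisfies $24M'+1 = p_1\bigl(p_1(24n+1)+24i\bigr)$; since $p_1 \geq 5$ and $1\leq i\leq p_1-1$ gives $p_1\nmid 24i$, the prime $p_1$ divides the product exactly once, which rules out its being a square, so $a'(M')=0$.

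The main obstacle is the prime $p=3$: since $9^{\alpha}\equiv 9\pmod{24}$ for $\alpha\geq 1$, the quantity $(p^{2\alpha}-1)/24$ fails to be an integer and the identification $p^{2\alpha}(96n+4)=96M+4$ breaks down. To cover $p=3$ one must instead write $3^{2\alpha}(96n+4)=32N+4$ with $N=(9^{\alpha}(24n+1)-1)/8\in\mathbb{Z}$ and reduce \eqref{E28} to the auxiliary claim $[q^{N-1}]\bigl(\psi(q)^{9}/\varphi(-q)^{8}\bigr)\equiv 0\pmod{27}$ whenever $n$ is not pentagonal. After the mod-27 reduction of the first paragraph this becomes a statement about certain trisected coefficients of $\varphi(-q)(-q^3;q^3)_\infty^{9}$, which I would attack by lifting $f_1^{3}\equiv f_3\pmod{3}$ to the finer congruence $f_1^{9}\equiv f_3^{3}+9 f_3^{2} g\pmod{27}$ with $g:=(f_1^{3}-f_3)/3$, and then applying a further 3-dissection to isolate the relevant coefficient modulo 27. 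I expect this mod-27 lifting to be the technically hardest step of the entire argument.
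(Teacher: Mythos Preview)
Your derivation of $(\star)$ is correct, and in fact cleaner than the paper's: instead of invoking the full 3-dissections $\psi(q)=f(q^3,q^6)+q\psi(q^9)$ and $\varphi(-q)=\varphi(-q^9)-2qf(-q^3,-q^{15})$ before reducing, you first reduce mod $27$ via $f_k^{27}\equiv f_{3k}^{9}$ and only then trisect. Your treatment of \eqref{E28}--\eqref{E30} for $\alpha=0$ or $p\ge 5$ is also correct and genuinely different from the paper's. The paper proves the stability $\sum_n v_0(p_1^{2\alpha}(96n+4))q^n\equiv 3f(q,q^2)\pmod{27}$ by induction on $\alpha$, using the Cui--Gu $p_1$-dissection of $f(q,q^2)$; you bypass this entirely with the single observation that $p^{2\alpha}(96n+4)=96M+4$ for $M=p^{2\alpha}n+(p^{2\alpha}-1)/24$ and $24M+1=p^{2\alpha}(24n+1)$, so $(\star)$ at the index $M$ already gives the result. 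This is shorter and more transparent.

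The genuine gap is exactly the one you flag: $p=3$, $\alpha\ge 1$, where $(p^{2\alpha}-1)/24\notin\mathbb{Z}$ and your $M$-trick collapses. Your proposed route via a mod-$27$ lift $f_1^{9}\equiv f_3^{3}+9f_3^{2}g$ is plausible but unnecessary. The paper's fix is direct. Extract the $q^{3n+1}$ part of your own mod-$27$ identity $\sum v_0(32n+4)q^n\equiv 3\psi(q)-q\varphi(-q)(-q^3;q^3)_\infty^{9}$; since $D_0[\varphi(-q)]=\varphi(-q^9)$ and $D_1[\psi(q)]=q\psi(q^9)$, this gives
\[
\sum_{n\ge 0} v_0(96n+36)\,q^n \;\equiv\; 3\psi(q^3)\;-\;\varphi(-q^3)\,\frac{f_2^{9}}{f_1^{9}}\pmod{27}.
\]
Now use Toh's exact 3-dissection of $(f_2/f_1)^{3}$,
\[
\frac{f_2^{3}}{f_1^{3}}=\frac{f_6}{f_3}+3q\frac{f_6^{4}f_9^{5}}{f_3^{8}f_{18}}+6q^{2}\frac{f_6^{3}f_9^{2}f_{18}^{2}}{f_3^{7}}+12q^{3}\frac{f_6^{2}f_{18}^{5}}{f_3^{6}f_9},
\]
cube it, and pick off the $q^{3n}$ terms: one obtains $\sum_n v_0(9(32n+4))q^n\equiv 2\psi(q)-9q\psi(q^9)\pmod{27}$. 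A final trisection via $\psi(q)=f(q^3,q^6)+q\psi(q^9)$ then yields
\[
\sum_{n\ge 0} v_0\bigl(9(96n+4)\bigr)q^n\equiv 2f(q,q^2),\qquad \sum_{n\ge 0} v_0\bigl(81(32n+4)\bigr)q^n\equiv -7\psi(q)\pmod{27},
\]
which covers $p=3$, $\alpha=1$ directly and reduces $p=3$, $\alpha\ge 2$ to the $\psi$-case, where either the paper's $p$-dissection of $\psi$ or your own $M$-trick (now with $24$ replaced by $8$) finishes the job. So the missing ingredient is not a delicate mod-$27$ lifting but the single exact identity for $(f_2/f_1)^{3}$.
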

\begin{proof}
From \cite[Entry 31, p. 49]{BCB3}, we have
\begin{align}
\psi(q)&=f(q^3, q^6)+q\psi(q^9), \label{E31}\\
\varphi(-q)&=\varphi(-q^9)-2qf(-q^3, -q^{15}). \label{E32}
\end{align}
Employing \eqref{E31} and \eqref{E32} in \eqref{T5}, then extracting the terms involving  $q^{3n}$ and $q^{3n+1}$ from both sides,
\begin{equation}\label{E33}
\sum_{n=0}^\infty v_0(96n+4) q^n \equiv 3 f(q, q^2) \pmod{27}
\end{equation}
and
\begin{equation}\label{E34}
\sum_{n=0}^\infty v_0(96n+36) q^n \equiv 3\psi(q^3)- \varphi(-q^3) \frac{f_2^9}{f_1^9} \pmod{27}.
\end{equation}
From \cite[Eq. 3.1]{PCT}, we have
\begin{equation}\label{E35}
\frac{f_2^3}{f_1^3}=\frac{f_6}{f_3}+3q\frac{f_6^4f_9^5}{f_3^8f_{18}}+6q^2 \frac{f_6^3f_9^2f_{18}^2}{f_3^7}+12q^3\frac{f_6^2f_{18}^5}{f_3^6f_9}.
\end{equation}
Invoking \eqref{E35} in \eqref{E34}, then extracting the terms involving $q^{3n}$ from both sides,
\begin{align}
\nonumber\sum_{n=0}^\infty v_0(96(3n)+36) q^n &\equiv 2\frac{f_2^2}{f_1}-9q\frac{f_2^3f_6^5}{f_1^6f_3} \\
&\equiv 2 \psi(q)-9q\psi(q^9) \pmod{27}.\label{E36}
\end{align}
It follows from \eqref{E31} and \eqref{E36} that
\begin{align} 
\sum_{n=0}^\infty v_0(9(96n+4)) &\equiv 2 f(q, q^2) \pmod{27}, \label{E37}\\
\sum_{n=0}^\infty v_0(9(96n+36))q^n&\equiv -7 \psi(q^3) \pmod{27}\label{E38}
\end{align}
 and 
\begin{equation}\label{E39}
v_0(9(96n+68)) \equiv 0\pmod{27}
\end{equation}
for all nonnegative integers $n$. Congruences \eqref{E22}-\eqref{E24} follow from \eqref{E38} and \eqref{E39}.

It follows from \eqref{E38} that
\begin{equation*}
\sum_{n=0}^\infty v_0(9^2(32n+4))q^n\equiv -7 \psi(q) \pmod{27}.\label{E40}
\end{equation*}
In view of \eqref{EE11} and \eqref{EE2} and the above equation, it is easy that for any prime $p\geq3$
\begin{align}
\sum_{n=0}^\infty v_0(p^{2\alpha}\ 9^{2}(32n+4))  &\equiv -7\psi(q) \pmod{27},\label{E41}\\
\nonumber	& -7 \sum_{n=0}^\infty q^{n(n+1)/2} \pmod{27},
\end{align}
from which we obtain \eqref{E25}. Congruences \eqref{E26} and \eqref{E27} follow from \eqref{EE3}, \eqref{EE4} and \eqref{E41}.

Using \eqref{E31} in \eqref{E41}, then extracting the terms involving $q^{3n}$, we have
\begin{equation}
\sum_{n=0}^\infty v_0(p^{2\alpha}\ 9^{2}(96n+4))  \equiv -7 f(q, q^2) \pmod{27},\label{E42}
\end{equation}
for any integer $\alpha\geq 0$ and prime $p\geq3$.

From the definition,
\begin{equation}\label{E46}
f(q, q^2) = \sum_{n=-\infty}^\infty q^{n(3n+1)/2}.
\end{equation}
For any prime $p_1\geq 5$, the $p-$ dissection identity for $f(q, q^2)$ is given by (see \cite{CG}),
\begin{equation}\label{E43}
f(q, q^2) = \mathop{\sum_{k=-\frac{p_1-1}{2}}}_{k\neq \frac{\pm p_1-1}{6}}^{\frac{p_1-1}{2}} q^{\frac{3k^2+k}{2}} f(q^{\frac{3p_1^2+(6k+1)p_1}{2}}, q^{\frac{3p_1^2-(6k+1)p_1}{2}}) + q^{\frac{p_1^2-1}{24}}f(q^{p_1^2}, q^{2p_1^2}).
\end{equation}
Further, if $-(p_1-1)/2\leq k \leq (p_1-1)/2$ and $k \neq (\pm p_1-1)/6$,
\begin{equation*}
\frac{3k^2+k}{2} \not \equiv \frac{p_1^2-1}{24} \pmod{p_1}.
\end{equation*}
It is easy from \eqref{E33}, \eqref{E43} and mathematical induction that for any prime $p_1\geq5$ and $\alpha\geq 0$,
\begin{equation}\label{E44}
\sum_{n=0}^\infty v_0 (p_1^{2\alpha}(96n+4)) q^n \equiv 3 f(q, q^2) \pmod{27}.
\end{equation}
Thus, from \eqref{E37}, \eqref{E42} and \eqref{E44},
\begin{equation}\label{E45}
\sum_{n=0}^\infty v_0 (p^{2\alpha}(96n+4)) q^n \equiv K f(q, q^2) \pmod{27},
\end{equation}
where $K=2$ if $p=3$ and $\alpha=1$, $K=-7$ if $p=3$ and $\alpha>1$,
and $K=3$ if $p\geq5$.

\noindent Congruence \eqref{E28}  follows from \eqref{E45} and \eqref{E46}.

Now, utilizing \eqref{E43} in \eqref{E45} and extracting the terms involving $q^{p_1n+\frac{p_1^2-1}{24}}$, we obtain
\begin{equation}\label{E47}
\sum_{n=0}^\infty v_0 (p^{2\alpha}p_1(96n+4p_1)) q^n \equiv K f(q^{p_1}, q^{2p_1}) \pmod{27}
\end{equation}
from which, we obtain congruence \eqref{E29}.

Finally, according to \eqref{E43} and \eqref{E45}, if 
\begin{equation*}
j \not \equiv \frac{3k^2+k}{2}  \pmod{p_1}
\end{equation*}
that is, $(24j+1)$ is a quadratic non-residue modulo $p$, for prime $p_1\geq5$ and $|k| \leq (p_1-1)/2$, then
\begin{equation*}
\sum_{n=0}^\infty v_0 (p^{2\alpha}(96(p_1n+j)+4)) q^n \equiv 0 \pmod{27}
\end{equation*}
which is \eqref{E30}.
\end{proof}
\section{Closing remarks}
We end this paper by remarking that many congruences modulo certain number of the form $2^\alpha 3^\beta 5^\gamma 13$ can be presented. For instance, 
from \eqref{S23}, \eqref{S24}, \eqref{E22} and \eqref{E9}, we have
\begin{equation*}
v_0(56160n+39492) \equiv 0 \pmod{2^9 3^3 5^2 13}.
\end{equation*}

\end{document}